\renewcommand{\baselinestretch}{1.12}
\newcommand{\defn}[1]{\textcolor{Maroon}{\protect\emph{#1}}}
\setlist[itemize]{topsep=0ex,itemsep=0ex,parsep=0ex}
\setlist[enumerate]{topsep=0ex,itemsep=0ex,parsep=0ex}
\def\NAT@spacechar{~}
\crefname{lem}{Lemma}{Lemmas}
\crefname{thm}{Theorem}{Theorems}
\crefname{cor}{Corollary}{Corollaries}
\crefname{prop}{Proposition}{Propositions}
\crefname{conj}{Conjecture}{Conjectures}
\crefname{open}{Open Problem}{Open Problems}
\crefname{obs}{Observation}{Observations}
\theoremstyle{plain}
\newtheorem{thm}{Theorem}
\newtheorem{lem}[thm]{Lemma}
\newtheorem{cor}[thm]{Corollary}
\newtheorem{obs}[thm]{Observation}
\newtheorem{prop}[thm]{Proposition}
\theoremstyle{definition}
\renewcommand{\leq}{\leqslant}
\renewcommand{\geq}{\geqslant}
\theoremstyle{definition}
\DeclareMathOperator{\dist}{dist}
\DeclareMathOperator*{\sse}{\subseteq}
\DeclareMathOperator*{\N}{\mathbb{N}}
\DeclareMathOperator*{\Pcal}{\mathcal{P}}
\DeclareMathOperator*{\Lcal}{\mathcal{L}}
\DeclareMathOperator{\tw}{tw}
\DeclareMathOperator*{\pw}{pw}
\newcommand{\Bow}{\mathbin{\textup{\Bowtie}}}
\newcommand{\subsetsim}{\mathrel{\substack{\textstyle\subset\\[-0.2ex]\textstyle\sim}}}
\renewcommand{\thefootnote}{\fnsymbol{footnote}}
\begin{document}
\title{\bf The Product Structure of Squaregraphs}
\author{%
	Robert Hickingbotham\footnotemark[3]	\quad
	Paul Jungeblut\footnotemark[4]	\\
	Laura Merker\footnotemark[4] \quad 
	David R. Wood\footnotemark[3]
 }

\maketitle

\begin{abstract}
A \defn{squaregraph} is a plane graph in which each internal face is a $4$-cycle and each internal vertex has degree at least 4. This paper proves that every squaregraph is isomorphic to a subgraph of the semi-strong product of an outerplanar graph and a path. 
We generalise this result for infinite squaregraphs, and show that this is best possible in the sense that ``outerplanar graph'' cannot be replaced by ``forest''.
\end{abstract}

\section{Introduction}
\label{Introduction}

\footnotetext[3]{School of Mathematics, Monash University, Melbourne, Australia (\texttt{\{robert.hickingbotham,david.wood\}@monash.edu}). Research of R.H.\ supported by an Australian Government Research Training Program Scholarship. Research of D.W.\ supported by the Australian Research Council.}

\footnotetext[4]{Institute of Theoretical Informatics, Karlsruhe Institute of Technology, Germany (\texttt{\{paul.jungeblut,laura.merker2\}@kit.edu}).}

\renewcommand{\thefootnote}{\arabic{footnote}}
A \defn{squaregraph} is a plane graph\footnote{A \defn{plane graph} is a graph embedded in the plane with no crossings. The word `face' refers to the subgraph on the boundary of the face. A graph is \defn{outerplanar} if it is isomorphic to a plane graph where every vertex is on the outer-face.} in which each internal face is a $4$-cycle and each internal vertex has degree at least $4$. These graphs were introduced in 1973 by \citet{SZP73}. 
They have many interesting structural and metric properties. For example, \citet{BCE10} showed that squaregraphs are median graphs
and are thus partial cubes, and that every squaregraph can be isometrically embedded\footnote{A graph $H$ can be \defn{isometrically embedded} into a graph $G$ if there exists an isomorphism $\phi$ from $V(H)$ to a subgraph of $G$ such that $\dist_H(u,v)=\dist_G(\phi(u),\phi(v))$ for all $u,v\in V(H)$.} into the cartesian product\footnote{The following are the standard graph products. For graphs $ G $ and $ H $, the \defn{cartesian product} $ G \boxempty H $ is the graph with vertex-set $ V(G) \times V(H) $ with an edge between two vertices $ (v,w) $ and $ (v',w') $ if $ v=v' $ and $ ww' \in E(H) $, or $ w=w' $ and $ vv' \in E(G) $. The \defn{direct product} $ G \times H $ is the graph with vertex-set $ V(G) \times V(H) $ with an edge between two vertices $ (v,w) $ and $ (v',w') $ if $ vv' \in E(G) $ and $ ww' \in E(H) $. The \defn{strong product} $ G \boxtimes H := (G\boxempty H)\cup (G\times H)$. } of five trees. See the survey by \citet{BC08} for background on metric graph theory.


The primary contribution of this paper is the following product structure theorem for squaregraphs, as illustrated in \cref{fig:squaregraph-product}. For graphs $G$ and $H$, the \defn{semi-strong product} \defn{$ G \Bow H $} is the graph with vertex-set $ V(G) \times V(H) $ with an edge between two vertices $ (v,w) $ and $ (v',w') $ if 	$ v=v' $ and $ ww' \in E(H) $, or $ vv' \in E(G) $ and $ ww' \in E(H) $; see \citep{GRW76,HLL21} for example. Note that 
\[G \times H \,\sse\, G \,\Bow\, H \,\sse\, G \boxtimes H.\] 
We write \defn{$H \subsetsim G$} to mean that $H$ is isomorphic to a subgraph of $G$. 



\begin{restatable}{thm}{squaregraphs}
\label{squaregraphs}
For every squaregraph $G$ there is an outerplanar graph $H$ and a path $P$ such that $G\subsetsim H\Bow P$.
\end{restatable}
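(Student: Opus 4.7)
The plan is to use the BFS distance from a fixed root as the $P$-coordinate and to extract an outerplanar horizontal coordinate $\sigma$ from the plane embedding of $G$.

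Fix a vertex $r$ on the outer face of $G$ and set $\ell(v):=\dist_G(r,v)$; let $P$ be a path with one vertex per BFS layer. Since squaregraphs are median graphs and hence bipartite, every edge of $G$ joins two consecutive BFS levels $L_i=\ell^{-1}(i)$, so the $P$-coordinate of any embedding $G\subsetsim H\Bow P$ must equal $\ell(v)$. The problem reduces to constructing an outerplanar graph $H$ and a map $\sigma:V(G)\to V(H)$ that is injective on each $L_i$ and satisfies $\sigma(u)=\sigma(v)$ or $\sigma(u)\sigma(v)\in E(H)$ for every $uv\in E(G)$.

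To define $\sigma$, I would exploit both the plane embedding and the median structure. Each non-root vertex has at most two parents in the previous BFS layer (the two-dimensionality of $G$ as a median graph caps downward degree at two), and in the two-parent case the median property forces the parents to bound a $4$-face with a common grandparent, so the plane embedding canonically distinguishes a left and right parent. Picking the left parent throughout yields a spanning tree $T$ and hence an induced left-to-right order on the $T$-children of every vertex. The horizontal coordinate $\sigma(v)$ is then the rank of $v$ when the vertices of $L_{\ell(v)}$ are listed in the cyclic order that the plane embedding induces on the BFS sphere, with ranks carried from parents to children along $T$ (and shuffled carefully so that distinct vertices in a layer receive distinct ranks). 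I take $H$ to have vertex set $\sigma(V(G))$ with an edge $\sigma(u)\sigma(v)$ for every $uv\in E(G)$ with $\sigma(u)\ne\sigma(v)$; the embedding conditions are then automatic since $T$-edges on the same rank give the $v=v'$ case of the semi-strong product while the remaining edges give the $vv'\in E(H)$ case.

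The crucial verification, and the place I expect the hard work to sit, is that $H$ is outerplanar. I would place the vertices of $H$ on an outer cycle in the order in which they first appear on successive BFS spheres, and show that each edge of $H$, which arises from a $4$-face of $G$, yields a non-crossing chord: two $4$-faces of a plane graph cannot interleave, so the induced chord diagram has no crossings. The main obstacle is that many $4$-faces of $G$ may be identified by $\sigma$ onto the same pair of ranks, and one has to rule out that these identifications create a $K_4$- or $K_{2,3}$-minor of $H$. I expect this to be handled via the Djokovic--Winkler hyperplane structure of squaregraphs: hyperplanes correspond to a non-crossing family of arcs in the plane dual, and their transversal crossing pattern is exactly what transmits to the chord pattern of $H$ and secures outerplanarity.
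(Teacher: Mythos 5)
Your reduction is fine as far as it goes: with a \textsc{bfs}-layering from an outer root, all edges join consecutive layers (bipartiteness), and any map $\sigma$ that is injective on each layer yields $G\subsetsim H\Bow P$ where $H$ is the quotient on the fibres of $\sigma$. But that observation is essentially free --- the entire content of the theorem is that $\sigma$ can be chosen so that $H$ is outerplanar, and this is exactly where your proposal stops being a proof. The map $\sigma$ is never actually defined: ``ranks carried from parents to children along $T$ and shuffled carefully so that distinct vertices in a layer receive distinct ranks'' is not a construction, and different resolutions of that ``shuffle'' give completely different quotients, most of which are not outerplanar (taking $\sigma(v)$ to be the literal left-to-right position of $v$ in its layer, for instance, gives a quotient that can easily contain $K_4$ or $K_{2,3}$). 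The outerplanarity verification itself is only a plan: the claim that each edge of $H$ ``arises from a $4$-face'' is not right (edges of $H$ come from edges of $G$ whose endpoints receive different ranks), and you explicitly concede the main obstacle --- that many faces are identified onto the same pair of ranks and could create $K_4$- or $K_{2,3}$-minors --- deferring it to an unproven expectation about Djokovi\'c--Winkler hyperplanes. As written, the hard step is assumed, not established.

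For comparison, the paper avoids any chord-diagram or hyperplane analysis by making the fibres of $\sigma$ \emph{vertical paths that reach the outer face}: first $G$ is drawn as a leveled plane graph on its \textsc{bfs}-layers (a result of Bannister et al.), inner vertices are shown to have up-degree at most $2$ and hence down-degree at least $2$, and then each inner vertex of $L_{i-1}$ is matched to its \emph{leftmost neighbour in $L_i$}; planarity forces these edges to form a matching, the components of the union of these matchings are vertical paths, and the bottom end of each path is necessarily an outer vertex. Contracting each part (a connected subgraph containing an outer vertex) gives a plane graph with every vertex on the outer face, so $H$ is outerplanar with no further case analysis. Note the direction of the choice matters: the paper selects a \emph{child} for every inner vertex, which is what guarantees each part terminates on the outer face; your left-\emph{parent} tree does not obviously have this property, and without it there is no mechanism in your argument forcing the quotient onto the outer face. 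Your median-graph facts (down-degree toward the root at most $2$, two parents spanning a square) are plausible and could substitute for the paper's up-degree lemma, but they do not touch the missing outerplanarity argument, which is the crux.
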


Note that since a path is bipartite, $H \Bow P$ is also bipartite.

\begin{figure}[h]
    \centering
    \includegraphics{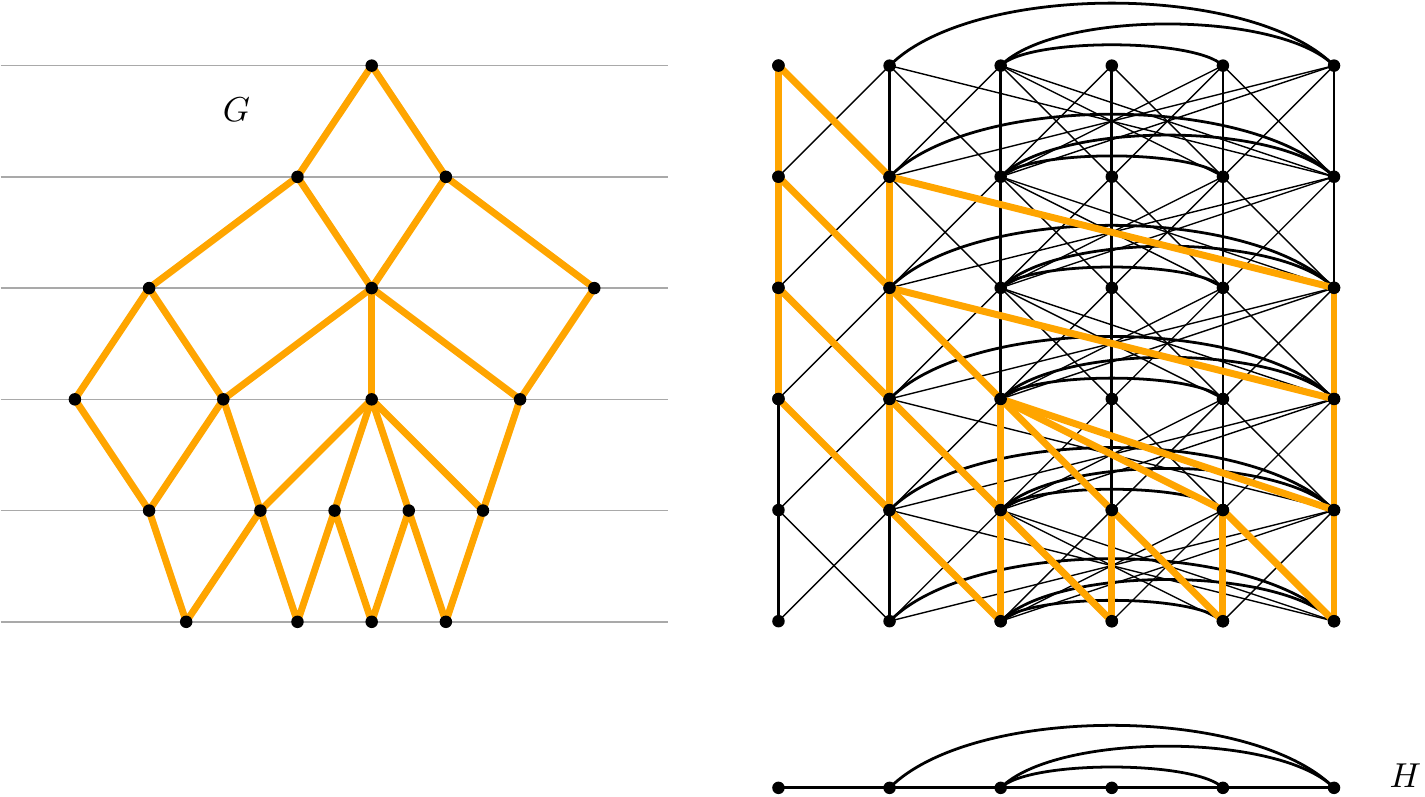}
    \caption{%
        A squaregraph $ G $ (left) isomorphic to a subgraph of the semi-strong product $ H \Bow P $ of an outerplanar graph $ H $ and a path $P$ (right).
    }
    \label{fig:squaregraph-product}
\end{figure}

We in fact prove a more general sufficient condition for a plane graph to have such a product structure which implies \cref{squaregraphs}; see \cref{srtw2-bfs} in \cref{SectionUB}. 

The second contribution of this paper is to show that \cref{squaregraphs} is best possible in the sense that ``outerplanar graph'' cannot be replaced by ``forest''. Moreover, this lower bound holds for strong products. In fact, we prove that for every integer $\ell\in\mathbb{N}$ there is a squaregraph $G$ such that for any graph $H$ and path $P$, if $G\subsetsim H\boxtimes P\boxtimes K_\ell$ then $H$ contains a cycle (and is therefore not a forest). This result actually follows from a stronger lower bound for bipartite graphs, which has other interesting consequences; see \cref{BipartiteLower} in \cref{SectionLB}. Also note that \cref{squaregraphs} cannot be strengthened by replacing ``outerplanar graph'' by ``graph with bounded pathwidth''. Indeed, \citet{BDJMW} showed that for every $k \in\mathbb{N}$ there is a tree $T$ (which is a squaregraph) such that for any graph $H$ and path $P$, if $T\subsetsim H \boxtimes P$ then $\pw(H)\geq k$. 

In \cref{squaregraphs} it is natural to ask whether there is such an outerplanar graph $H$ independent of $G$. This leads to the study of infinite squaregraphs, previously investigated by \citet{BCE10}. Our final contribution is an extension of \cref{squaregraphs} in which we show that every (possibly infinite) squaregraph is isomorphic to a subgraph of $O\Bow \overrightarrow{P}$, where $O$ is the universal outerplanar graph and $\overrightarrow{P}$ is the 1-way infinite path; see \cref{Infinite}.


Before proving the above results, we provide further motivation by putting  \cref{squaregraphs} in context. The study of the product structure of graph classes emerged with the following seminal result by \citet{DJMMUW20}, now called the \emph{Planar Graph Product Structure Theorem}. This result describes planar graphs in terms of the strong product of graphs with bounded treewidth\footnote{A \defn{tree-decomposition} of a graph $G$ is a collection $(B_x\subseteq V(G):x\in V(T))$ of subsets of $V(G)$ (called \defn{bags}) indexed by the nodes of a tree $T$, such that (a) for every edge $uv\in E(G)$, some bag $B_x$ contains both $u$ and $v$, and (b) for every vertex $v\in V(G)$, the set $\{x\in V(T):v\in B_x\}$ induces a non-empty subtree of $T$. The \defn{width} of a tree-decomposition is the size of the largest bag minus~$1$. The \defn{treewidth} of a graph $G$, denoted by \defn{$\tw(G)$}, is the minimum width of a tree-decomposition of $G$. A \defn{path-decomposition} of a graph $G$ is a tree decomposition $(B_x\subseteq V(G):x\in V(T))$ where $T$ is a path. The \defn{pathwidth} of a graph $G$, denoted by \defn{$\pw(G)$}, is the minimum width of a path-decomposition of $G$.} and a path. A connected graph has treewidth at most 1 if and only if it is a tree. Treewidth measures how similar a graph is to a tree and is an important parameter in algorithmic and structural graph theory; see \cite{HW17,Reed97}. Graphs with bounded treewidth are considered to be a relatively simple class of graphs. 

\begin{thm}[\cite{UWY,DJMMUW20}]\label{PGPST}
For every planar graph $G$ there is a graph $H$ of treewidth at most $6$ and a  path $P$ such that $G\subsetsim H \boxtimes P$.
\end{thm}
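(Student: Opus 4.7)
The plan is to reduce to planar triangulations and then exhibit a layered vertex partition of controlled quotient treewidth. The main bridge I will use is the standard equivalence (folklore in this area): a graph $G$ satisfies $G\subsetsim H\boxtimes P$ for some path $P$ and some graph $H$ with $\tw(H)\leq k$ if and only if $G$ admits a partition $\mathcal{P}$ of $V(G)$ together with a layering $(V_0,V_1,V_2,\ldots)$ such that $|X\cap V_i|\leq 1$ for every $X\in\mathcal{P}$ and every $i$, and the quotient graph $G/\mathcal{P}$ has treewidth at most $k$. The forward direction simply maps each vertex $v\in V_i\cap X$ to the pair $(X,i)\in V(H)\times V(P)$ and checks that edges of $G$ are preserved; the reverse direction reads off the partition from the fibres of the projection to $H$.

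Since the class of subgraphs of $H\boxtimes P$ is closed under taking subgraphs, I assume without loss of generality that $G$ is a planar triangulation. Fix any root vertex $r$, build a BFS spanning tree $T$ rooted at $r$, and let $V_i$ be the set of vertices at distance $i$ from $r$; these layers will play the role of the path factor. I then partition $V(G)$ into \emph{vertical paths} of $T$: each part is a maximal root-directed subpath, determined by choosing, for each non-leaf vertex, exactly one ``preferred'' child to be grouped with it (for example, the leftmost child in the planar rotation around the parent). Every vertical path meets each layer in at most one vertex, so the layered width is $1$.

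The main technical step is to verify that, with an appropriate choice of preferred children, the quotient graph $H := G/\mathcal{P}$ has treewidth at most $6$. My plan here is to construct a tree-decomposition of $H$ indexed by the internal triangular faces of $G$: for each such face $f$ the bag $B_f$ consists of the parts of $\mathcal{P}$ containing a vertex of $f$ together with the parts lying on the short vertical segments linking these vertices up to their first common BFS-ancestor. Three ingredients drive the analysis: (a) $T$ is a BFS tree, so every non-tree edge of $G$ joins vertices in the same or adjacent layers, heavily restricting which parts can be adjacent in $H$; (b) planarity, allowing the bags to be glued along a spanning tree of the dual graph of $G$; and (c) the triangulation hypothesis, which keeps each face-bag small.

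The hardest step will be the final bound of $6$. A direct counting argument in this framework yields only $\tw(H)\leq 8$, which corresponds essentially to the original Dujmović--Joret--Micek--Morin--Ueckerdt--Wood bound. Pushing the bound down to the value of Ueckerdt--Wood--Yi cited in the statement requires a more careful choice of preferred children together with a sharper analysis of how many distinct vertical paths can meet a single face-bag, exploiting the planar rotation and a judicious choice of the root triangle so that parts from the outer face are shared across many bags. Once a tree-decomposition of $H$ of width at most $6$ is in hand, the equivalence of the first paragraph converts the layered partition into the desired embedding $G\subsetsim H\boxtimes P$, completing the proof.
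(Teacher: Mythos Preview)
First, a framing point: the paper does not give its own proof of this theorem. \cref{PGPST} is quoted as a known result of \citet{DJMMUW20} (with the improved constant due to \citet{UWY}) and is used only as background. So there is no ``paper's proof'' to compare against; what follows is an assessment of your sketch on its own terms and against the actual arguments in those references.

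Your reduction to triangulations, the use of a \textsc{bfs}-layering, and the equivalence with thin layered $H$-partitions (\cref{OrthogonalPartitions}) are all correct and are indeed the scaffolding of the DJMMUW proof. The gap is in the construction of the partition $\mathcal{P}$. In DJMMUW/UWY the vertical paths are \emph{not} produced by a local ``preferred child'' rule on a single \textsc{bfs}-tree. They are produced by a global recursive sprouting/tripod procedure: one maintains a near-triangulation whose boundary is covered by a bounded number of previously chosen vertical paths, selects an interior face, grows vertical paths (or a tripod) from it to the boundary, and recurses on the resulting subregions. The tree-decomposition of the quotient is indexed by this recursion tree, and the width bound comes from the invariant on the number of boundary paths. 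Your face-indexed decomposition, with bags augmented by ``short vertical segments linking these vertices up to their first common \textsc{bfs}-ancestor'', does not have bounded width under a preferred-child partition: for a triangle whose vertices lie in consecutive layers, the least common \textsc{bfs}-ancestor can be the root, and the root-directed segment from any vertex can cross arbitrarily many parts of your partition (you leave a part every time the preferred-child chain is broken). So neither the claimed bound of $8$ nor the target $6$ follows from what you wrote; both assertions are unsupported.

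In short, the high-level shape is right, but the specific partition you propose is not the one that works, and your tree-decomposition does not control bag size. If you want a genuine proof you should replace the preferred-child step by the recursive tripod construction of \citet{UWY} (or the earlier vertical-path recursion of \citet{DJMMUW20} for the bound of $8$).
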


The original version of the Planar Graph Product Structure Theorem by \citet{DJMMUW20} had ``treewidth at most $8$'' instead of ``treewidth at most 6''. \citet{UWY} proved \cref{PGPST} with ``treewidth at most $6$''. Since outerplanar graphs have treewidth at most $2$, \Cref{squaregraphs} is stronger than \cref{PGPST} in the case of squaregraphs. \Cref{squaregraphs} is also stronger than \cref{PGPST} in the sense that \Cref{squaregraphs} uses $\Bow$ whereas \cref{PGPST} uses $\boxtimes$. That said, as explained in \cref{Preliminaries}, it is well-known that in the case of bipartite planar graphs $G$, the proof of \cref{PGPST} can be adapted to show that $G\subsetsim H\Bow P$.

Product structure theorems are useful since they reduce problems on a complicated class of graphs (such as planar graphs or squaregraphs) to a simpler class of graphs (bounded treewidth graphs such as outerplanar graphs). They have been the key tool to resolve several open problems regarding queue layouts~\citep{DJMMUW20}, 
nonrepetitive colourings~\citep{DEJWW20}, 
centered colourings~\citep{DFMS21}, 
clustered colourings~\citep{DEMWW22}, 
adjacency labellings~\citep{BGP20,DEJGMM21,EJM}, 
vertex rankings~\citep{BDJM}, 
twin-width~\citep{BKW}, 
odd colourings~\citep{DMO}, 
and infinite graphs \cite{HMSTW}. 
Similar product structure theorems are known for other classes including graphs with bounded Euler genus~\cite{DJMMUW20,DHHW}, 
apex-minor-free graphs~\cite{DJMMUW20}, 
$(g,d)$-map graphs~\cite{DMW}, 
$(g,\delta)$-string graphs~\cite{DMW}, 
$(g,k)$-planar graphs~\cite{DMW}, 
powers of planar graphs~\cite{DMW,HW21b}, 
$k$-semi-fan-planar graphs~\cite{HW21b} 
and $k$-fan-bundle planar graphs~\cite{HW21b}. 

\subsection{Preliminaries}
\label{Preliminaries}

We consider undirected simple graphs $G$ with vertex-set $V(G)$ and edge-set $E(G)$. Unless stated otherwise, graphs are finite. Undefined terms and notation can be found in Diestel's textbook~\citep{Diestel5}. 

For $m,n \in \mathbb{Z}$ with $m \leq n$, let $[m,n]:=\{m,m+1,\dots,n\}$ and $[n]:=[1,n]$. 

Let $P_n$ denote a path on $n$ vertices. For graphs $G$ and $H$, the \defn{complete join $G+H$} is the graph obtained by the disjoint union of~$G$ and~$H$ by adding all edges between~$G$ and~$H$. For a graph $G$ with $A,B\sse V(G)$, let \defn{$G[A,B]$} be the subgraph of $G$ with $V(G[A,B]):=A \cup B$ and $E(G[A,B]):=\{uv \in E(G):u \in A, v\in B\}$. 

A \defn{matching} $M$ in a graph $G$ is a set of edges in $G$ such that no two edges in $M$ have a common endvertex. A matching $M$ \defn{saturates} a set $S\sse V(G)$ if every vertex in $S$ is incident to some edge in $M$.

A \defn{model} of $H$ in $G$ is a function $\mu$ with domain $V(H)$ such that: $\mu(v)$ is a connected subgraph of $G$; $\mu(v)\cap \mu(w)=\emptyset$ for all distinct $v,w\in V(H)$; and $\mu(v)$ and $\mu(w)$ are adjacent for every edge $vw \in E(H)$. If, for some $s \in \mathbb{N}_0$, there is a model $\mu$ of $H$ in $G$ such that $|V(\mu(v))|\leq s$ for each $v \in V(H)$, then $H$ is an \defn{$s$-small minor} of $G$.

In a plane graph $G$, a vertex is \defn{outer} if it is on the outer-face of $G$ and is \defn{inner} otherwise. Let \defn{$I_G$} denote the set of inner vertices in $G$. 

Let $G$ be a graph. A \defn{partition} of $G$ is a set $\Pcal$ of sets of vertices in $G$ such that each vertex of $G$ is in exactly one element of $\Pcal$. Each element of $\Pcal$ is called a \defn{part}. The \defn{quotient} of $\Pcal$ (with respect to $G$) is the graph, denoted by \defn{$G/\Pcal$}, with vertex set $\Pcal$ where distinct parts $A,B\in \mathcal{P}$ are adjacent in $G/\Pcal$ if and only if some vertex in $A$ is adjacent in $G$ to some vertex in $B$. An \defn{$H$-partition} of $G$ is a partition $\Pcal=(A_x:x \in V(H))$ where $H\cong G/\Pcal$. 
For an $H$-partition $(A_x:x\in V(H))$ of $G$, for each subgraph $J\sse G$ the quotient $\tilde{H}$ of the partition $(A_x\cap V(J):x\in V(H),A_x\cap V(J)\neq\emptyset)$ is called the \defn{sub-quotient} for $J$. Note that $\tilde{H}$ is a subgraph of $H$. 

A \defn{layering} of a graph $G$ is an ordered partition $\mathcal{L}:=(L_0,L_1,\dots)$ of $V(G)$ such that for every edge $vw \in E(G)$, if $v \in L_i$ and $w \in L_j$, then $|i-j|\leq 1$. $\mathcal{L}$ is a \defn{\textsc{bfs}-layering} (of $G$) if $L_0 = \{r\}$ for some \defn{root vertex} $r\in V(G)$ and $L_i=\{v\in V(G):\dist_G(v,r)=i\}$ for all $i\geq 1$. A path $P$ is \defn{vertical} (with respect to $\mathcal{L}$) if $|V(P)\cap L_i|\leq 1$ for all $i\geq 0$. 

A \defn{layered partition} $(\Pcal,\mathcal{L})$ of a graph $G$ consists of a partition $\Pcal$ and a layering~$\mathcal{L}$ of $G$. If $\Pcal$ is an $H$-partition, then $(\Pcal,\mathcal{L})$ is a \defn{layered $H$-partition}. If 
$\Pcal=(A_x:x\in V(H))$, then the \defn{width} of $(\Pcal,\mathcal{L})$ is $\max\{|A_x\cap L|:x\in V(H), L \in \mathcal{L}\}$. Layered partitions of width at most $1$ are \defn{thin}. Layered partitions were introduced by \citet{DJMMUW20} who observed the following connection to strong products (which follows directly from the definitions). 

\begin{obs}[\cite{DJMMUW20}]\label{OrthogonalPartitions}
    For all graphs $G$ and $H$, 
    $G \subsetsim H\boxtimes P\boxtimes K_{\ell}$ for some path $P$ if and only if $G$ has a layered $H$-partition $(\Pcal,\mathcal{L})$ with width at most $\ell$. 
\end{obs}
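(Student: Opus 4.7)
The plan is to verify both directions by matching the three coordinates of $H\boxtimes P\boxtimes K_\ell$ with the three ingredients of a layered $H$-partition of width $\ell$: the $H$-coordinate encodes the part, the $P$-coordinate encodes the layer, and the $K_\ell$-coordinate enumerates the (at most $\ell$) vertices of each bag-layer intersection. In both directions the argument is a direct unpacking of the strong product edge rule.

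For the ``only if'' direction, I would start from an embedding $\varphi\colon V(G)\hookrightarrow V(H)\times V(P)\times V(K_\ell)$, where $P=p_0p_1\cdots$ is the vertex sequence of $P$. Set $A_x$ to be the preimage of $\{x\}\times V(P)\times V(K_\ell)$ and $L_i$ to be the preimage of $V(H)\times\{p_i\}\times V(K_\ell)$. By the strong product edge rule, any edge $uv \in E(G)$ has $H$-coordinates equal or $H$-adjacent and $P$-coordinates equal or $P$-adjacent; so $\mathcal{L}$ is a layering and $G/\mathcal{P}$ is a subgraph of $H$ (allowing empty parts in $\mathcal{P}$ yields an honest $H$-partition). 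Injectivity of $\varphi$ forces $|A_x\cap L_i|\leq |V(K_\ell)|=\ell$, so the width is at most $\ell$.

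For the ``if'' direction, given $(\mathcal{P},\mathcal{L})$ of width at most $\ell$ with $\mathcal{P}=(A_x : x\in V(H))$ and $\mathcal{L}=(L_0,L_1,\dots)$, I would let $P$ be the path $p_0 p_1\cdots$ indexed by the layers, enumerate each nonempty $A_x\cap L_i$ as $v_1,\dots,v_s$ with $s\leq\ell$, and set $\varphi(v_j):=(x,p_i,j)$. For any edge $uv\in E(G)$, the $H$-partition property makes the $H$-coordinates equal or $H$-adjacent, the layering property makes the $P$-coordinates equal or $P$-adjacent, and the $K_\ell$-coordinates are trivially either equal or $K_\ell$-adjacent; moreover $\varphi(u)\neq\varphi(v)$, since two distinct vertices sharing a bag and a layer are assigned distinct $K_\ell$-coordinates. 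Hence $\varphi(u)\varphi(v)$ is an edge of $H\boxtimes P\boxtimes K_\ell$, and $\varphi$ is the desired embedding.

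There is no substantive obstacle; the only mild subtlety is the benign use of empty parts in the forward direction so that $G/\mathcal{P}$ is literally (rather than a subgraph of) $H$.
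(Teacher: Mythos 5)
Your proposal is correct and is exactly the direct definition-unpacking that the paper has in mind: it states this observation without proof, remarking that it ``follows directly from the definitions.'' Your coordinate-matching argument (part $\leftrightarrow$ $H$-coordinate, layer $\leftrightarrow$ $P$-coordinate, enumeration of each $A_x\cap L_i$ $\leftrightarrow$ $K_\ell$-coordinate), including the benign remark about empty parts in the forward direction, is precisely that verification.
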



We have the following analogous observation for $\Bow$ (which also follows directly from the definitions). 

\begin{obs}\label{BowPartitions}
    For all graphs $G$ and $H$, 
    $G \subsetsim (H\boxtimes K_\ell) \Bow P$ for some path $P$ if and only if $G$ has a layered $H$-partition $(\Pcal,\mathcal{L})$ with width at most $\ell$, such that each $L \in \mathcal{L}$ is an independent set in $G$. 
\end{obs}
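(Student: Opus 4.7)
The plan is to prove both directions directly from the definitions, closely mirroring the proof of \cref{OrthogonalPartitions} while carefully tracking which edges of the strong product $H\boxtimes K_\ell\boxtimes P$ are dropped when passing to the semi-strong product $(H\boxtimes K_\ell)\Bow P$. The key structural observation is that in $(H\boxtimes K_\ell)\Bow P$, every edge joining $((x,j),p)$ and $((x',j'),p')$ satisfies $pp'\in E(P)$ (and hence $p\neq p'$), in both clauses of the definition of $\Bow$. This is the only difference from $\boxtimes$, and it will correspond precisely to the condition that each layer is independent in $G$.

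For the forward direction, I would start with an injective homomorphism $\phi\colon V(G)\to V((H\boxtimes K_\ell)\Bow P)$ realising $G\subsetsim (H\boxtimes K_\ell)\Bow P$, write $\phi(v)=((h_v,j_v),p_{i_v})$, and set $A_x:=\{v:h_v=x\}$ and $L_i:=\{v:i_v=i\}$. Since $(H\boxtimes K_\ell)\Bow P\sse (H\boxtimes K_\ell)\boxtimes P\cong H\boxtimes K_\ell\boxtimes P$, \cref{OrthogonalPartitions} immediately yields that $\Pcal=(A_x)$ is an $H$-partition of $G$ of width at most $\ell$ and that $\Lcal=(L_i)$ is a layering of $G$. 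The additional conclusion that each $L_i$ is independent in $G$ is exactly the structural observation above: any edge $vw$ in $G$ becomes an edge of $(H\boxtimes K_\ell)\Bow P$ under $\phi$, forcing $p_{i_v}\neq p_{i_w}$ and thus $v$ and $w$ to lie in different layers.

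For the reverse direction, given $(\Pcal,\Lcal)$ of width at most $\ell$ with independent layers, I would, for each $x\in V(H)$ and each index $i$, fix an arbitrary injection $A_x\cap L_i\hookrightarrow [\ell]$ denoted $v\mapsto j_v$, and set $\phi(v):=((h_v,j_v),p_{i_v})$, where $h_v$ indexes the part of $\Pcal$ containing $v$ and $i_v$ its layer. This $\phi$ is injective because $v$ is recovered from $(h_v,i_v,j_v)$. For any edge $vw\in E(G)$, independence of layers yields $i_v\neq i_w$, which combined with the layering property forces $p_{i_v}p_{i_w}\in E(P)$; the $H$-partition property gives $h_v=h_w$ or $h_vh_w\in E(H)$, and a short case split on whether $h_v=h_w$ and whether $j_v=j_w$ shows that either $(h_v,j_v)=(h_w,j_w)$ (first clause of $\Bow$) or $(h_v,j_v)(h_w,j_w)\in E(H\boxtimes K_\ell)$ (second clause). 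In every case $\phi(v)\phi(w)$ is an edge of $(H\boxtimes K_\ell)\Bow P$.

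The only point requiring care is the exact matching of the absence of ``horizontal'' edges in $\Bow$ with the independence of layers in $G$; beyond this, the proof is pure definitional bookkeeping, and I anticipate no substantive obstacle.
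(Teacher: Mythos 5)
Your proof is correct and takes essentially the same route as the paper: the paper treats this as an immediate consequence of the definitions, noting (as you do) that the only edges of $H\boxtimes K_\ell\boxtimes P$ absent from $(H\boxtimes K_\ell)\Bow P$ are those with equal $P$-coordinate, and that independence of each layer of $\Lcal$ in $G$ is exactly what guarantees no edge of $G$ needs such an edge. Your write-up just spells out the definitional bookkeeping that the paper leaves implicit.
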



In \cref{BowPartitions} we may use $G \subsetsim (H \boxtimes K_{\ell}) \Bow P $ instead of $G \subsetsim H\boxtimes K_{\ell} \boxtimes P$ when each $L\in\Lcal$ is an independent set, since no edges in $G$ correspond to edges in $H\boxtimes K_{\ell} \boxtimes P$ of the form $ (v,x,w)(v',y,w) $ where $vv'\in E(H)$, $x,y\in V(K_{\ell})$ and $w \in V(P)$.


As mentioned in \cref{Introduction}, it is well-known that in the case of bipartite planar graphs $G$, the proof of \cref{PGPST} can be adapted to show that $G\subsetsim H\Bow P$ for some graph $H$ of treewidth at most $6$ and for some path $P$. To see this, we may assume that $G$ is edge-maximal bipartite planar. Thus $G$ is connected, and each face is a 4-cycle. Let $\Lcal=(L_0,L_1,\dots)$ be a \textsc{bfs}-layering of $G$. So each $L_i$ is an independent set. Each face can be written as $(a,b,c,d)$ where $a\in L_i$ and $b,d\in L_{i+1}$ and $c\in L_i \cup L_{i+2}$, for some $i\geq 0$. Let $G'$ be the planar triangulation obtained from $G$ by adding the edge $bc$ across each such face. Thus $(L_0,L_1,\dots)$ is a layering of $G'$. The proof of \cref{PGPST} shows that $G'$ has a partition $\Pcal$ such that $\tw(G/\Pcal)\leq 6$ and $(\Pcal,\Lcal)$ is a thin layered partition. By construction, $(\Pcal,\Lcal)$ is a layered partition of $G$. By \cref{BowPartitions}, $G \subsetsim H \Bow P$.


A \defn{red-blue colouring} of a bipartite graph $G$ is a proper vertex $2$-colouring of $G$ with colours `red' and `blue'. 

\section{Sufficient Conditions}
\label{SectionUB}

In this section we prove \cref{squaregraphs}. We first prove the following, more general sufficient condition for a plane graph to be isomorphic to a subgraph of the strong or semi-strong product of an outerplanar graph and a path. Afterwards, we show that this more general result implies \cref{squaregraphs}.

\begin{thm}\label{srtw2-bfs}
    Let~$G$ be a plane graph with inner vertices~$I_G$. If $ G $ has a layering $\mathcal{L}= (L_0,L_1,\dots)$ such that $G[L_{i - 1},L_i]$ has a matching saturating $L_{i - 1}\cap I_G$ for each $i \in [n]$, then $G \subsetsim H \boxtimes P$ for some outerplanar graph $H$ and path $P$. Moreover, if $V(L_i)$ is an independent set for all $L_i \in \mathcal{L}$, then $G \subsetsim H \Bow P$.
\end{thm}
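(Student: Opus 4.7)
My plan is to apply the layered-partition observations developed in the preliminaries. By \cref{OrthogonalPartitions} with $\ell = 1$, the first conclusion amounts to constructing a layered $H$-partition $(\mathcal{P},\mathcal{L})$ of $G$ of width~$1$ with $H$ outerplanar; the moreover clause then falls out of \cref{BowPartitions}, since independence of each layer is already part of the hypothesis.

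For the partition, I would take $\mathcal{P}$ to be the vertex sets of the connected components of the subgraph $F := (V(G),\, M_1\cup\cdots\cup M_n)$ of $G$. Since each $M_i$ is a matching between the consecutive layers $L_{i-1}$ and $L_i$, every component of $F$ is a path that meets each layer in at most one vertex. Hence $(\mathcal{P},\mathcal{L})$ has width~$1$, and each part is connected in~$G$ because the matching edges are edges of~$G$.

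The quotient $H := G/\mathcal{P}$ inherits a plane embedding from $G$, as contracting connected subgraphs preserves planarity. The heart of the proof is to show that $H$ is outerplanar, which I would do via the stronger statement that every vertex of $H$ lies on the outer face of the inherited embedding — equivalently, that every part $A\in\mathcal{P}$ contains at least one outer vertex of~$G$. Given a part $A$ with underlying chain $v_0,v_1,\dots,v_k$ (where $v_{j+1}=\mu(v_j)$), the tail $v_k$ has no outgoing matching edge; by the saturation hypothesis, an inner vertex of any interior layer is matched downward, forcing $v_k$ to be outer. Thus each part reaches the boundary, and after contraction each vertex of $H$ sits on the outer face of the inherited embedding, so $H$ is outerplanar.

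The main obstacle I anticipate is the boundary case in which a chain terminates at an inner vertex of the final layer $L_n$ (for which no matching $M_{n+1}$ is required). I would handle this either by preprocessing so that $L_n$ contains no inner vertex — which is the situation in the target squaregraph application, where a BFS-layering from an outer root yields a top layer consisting of outer vertices — or by an explicit local adjustment that groups any stranded top-layer inner vertex with an adjacent outer-bearing chain. Once this boundary case is settled, the width-$1$ layered $H$-partition $(\mathcal{P},\mathcal{L})$ with $H$ outerplanar is in hand, and both conclusions of the theorem follow from \cref{OrthogonalPartitions,BowPartitions}.
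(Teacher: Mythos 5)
Your core construction is exactly the paper's: take the components of the union of the matchings $E_1,\dots,E_n$, observe that each component is a vertical path (each vertex has at most one parent and at most one child), so the resulting layered $H$-partition is thin; note $H=G/\Pcal$ is a planar minor of $G$; argue that the top endpoint of each path is unsaturated and hence outer, so contracting the parts yields a plane embedding with every vertex on the outer-face; and finish with \cref{OrthogonalPartitions,BowPartitions}. Up to that point your argument and the paper's coincide step for step.

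The one place you diverge is the ``boundary case'' you leave unresolved, and here your second fallback is a dead end. The paper does not treat this as a case at all: in its proof the topmost vertex $v\in L_j$ of a component is outer ``because each vertex in $L_j\cap I_G$ is adjacent in $J$ to some vertex in $L_{j+1}$'', i.e.\ the saturation hypothesis is read as applying to every layer that contains inner vertices, so the last nonempty layer has no inner vertices at all. That reading is forced, not optional: under your permissive reading (inner vertices may sit unsaturated in the final layer) the statement is simply false, so no ``explicit local adjustment'' can rescue it. Indeed, any bipartition of $V(G)$ is a layering, so for a plane graph $G$ put all outer vertices in $L_0$ and all inner vertices in $L_1$; then $L_0\cap I_G=\emptyset$ and the matching condition is vacuous, yet by \cref{rtwLB} (with $k=3$, $\ell=1$) there are planar graphs $G$ for which $G\subsetsim H\boxtimes P$ forces $\tw(H)\geq 3$, so $H$ cannot be outerplanar. (Your concrete patch also has a local problem: merging a stranded top-layer vertex into an adjacent chain can destroy width $1$ when that chain already meets $L_n$.) So the right move is not preprocessing or patching but adopting the intended reading of the hypothesis — every layer's inner vertices are matched into the next layer — which is exactly what the squaregraph application supplies via \cref{WeaklyLeveledUp} (down-degree at least $2$ rules out inner vertices in the last level). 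With that reading fixed, your proof is complete and is the paper's proof.
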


\begin{proof}
    By \cref{OrthogonalPartitions,BowPartitions}, it suffices to show that $G$ has a thin layered $H$-partition $\Pcal$ (with respect to $\mathcal{L}$) for some outerplanar graph $H$. 
    For each $i \in [n]$, let $E_{i}$ be a matching in $G[L_{i-1},L_i]$ that saturates $ L_{i - 1}\cap I_G$. For vertices $u \in L_{i-1}$ and $v \in L_i$ and an edge $uv\in E_{i}$, we say that $u$ is the \defn{parent} of $v$ and $v$ is the \defn{child} of $u$. 
    Observe that each vertex $u \in L_{i-1}\cap I_G$ has exactly one child and each vertex $v \in L_i$ has at most one parent.
	Let $J$ be the subgraph of $G$ where $V(J)=V(G)$ and $E(J)=\bigcup_{i \in [n]} E_i$. 
	
	Let $X$ be a connected component of $J$.
	Choose the maximum $j \in [0,n]$ such that there exists some vertex $v \in V(X)\cap L_j$.
	Vertex~$v$ must be outer because each vertex in $L_j \cap I_G$ is adjacent in~$J$ to some vertex in~$L_{j+1}$. As illustrated in \cref{fig:path-contraction}, since each vertex in $X$ has at most one child and at most one parent, $X$ is a vertical path with respect to $\mathcal{L}$.
	
	\begin{figure}[!ht]
		\centering%
    \includegraphics[page = 3]{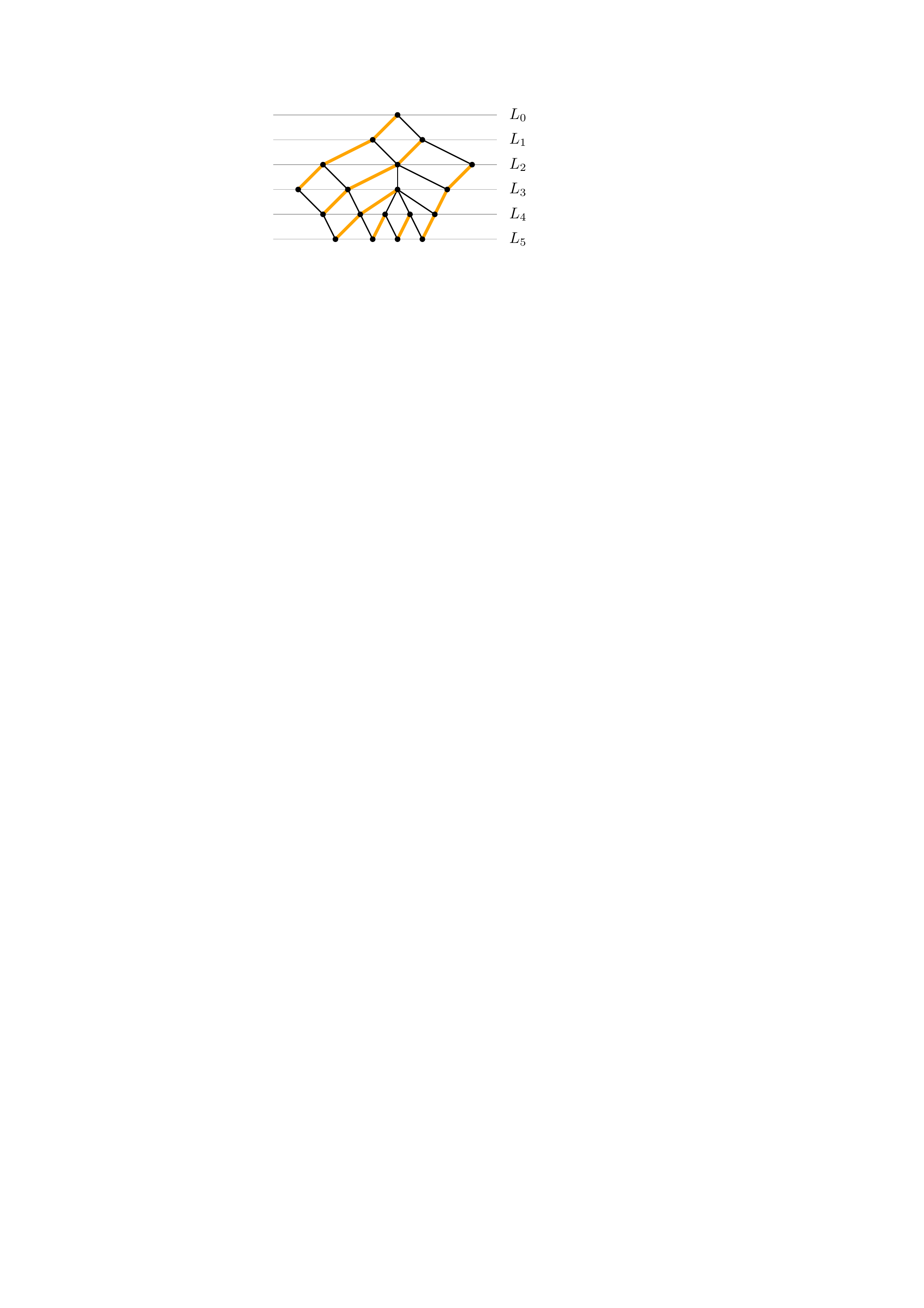} \qquad
    \includegraphics[page = 6]{path-contraction-leveled}
    	\caption{%
            Left: A squaregraph with a \textsc{bfs}-layering and a partition $ \Pcal $ into vertical paths (thick orange). The vertical paths are constructed from matchings between consecutive layers, where the leftmost vertex in $ L_i $ is chosen for each inner vertex in $ L_{i-1} $. 
            Right: The lower endpoint of each path is on the outer-face, so when each path is contracted we obtain an outerplanar graph. 
        }
		\label{fig:path-contraction}
	\end{figure}
	
	Let $\Pcal$ be the partition of $G$ determined by the connected components of $J$. Let $H=G/\Pcal$ be the quotient of $\Pcal$. Since each part in $\Pcal$ is a vertical path with respect to $\mathcal{L}$, it follows that $(\Pcal,\mathcal{L})$ is a thin layered $H$-partition. It remains to show that $H$ is outerplanar. Since each part in $\Pcal$ is connected, $H$ is a minor of $G$ and is therefore planar. Since each part of $\Pcal$ contains a vertex on the outer-face, contracting each part of $\Pcal$ into a single vertex gives a plane embedding of $H$ with each vertex on the outer-face; see \cref{fig:path-contraction}. Therefore $H$ is outerplanar.
	%
\end{proof}

We now work towards showing that squaregraphs satisfy the conditions for \cref{srtw2-bfs}. 

A plane graph $G$ is \defn{leveled} if the edges are straight line-segments and vertices are placed on a sequence of horizontal lines, $(L_0,L_1,\dots)$, called \defn{levels}, such that each edge joins two vertices in consecutive levels. If, in addition, we allow straight-line edges between consecutive vertices on the same level, then $G$ is \defn{weakly leveled}. Observe that the levels in a weakly leveled plane graph $G$ define a layering of $G$. Leveled plane graphs were first introduced by \citet{STT81}, and have since been well studied \cite{BDDEW19}.




For a weakly leveled plane graph $G$ with levels $(L_0,L_1,\dots)$ and a vertex $v\in L_i$, the \defn{up-degree} of $v$ is $|N_G(v)\cap L_{i-1}|$ and the \defn{down-degree} of $v$ is $|N_G(v)\cap L_{i+1}|$. We now give a more natural condition that forces our desired matching between two consecutive levels. 

\begin{lem}\label{WeaklyLeveledUp}
Let~$G$ be a weakly leveled plane graph with inner vertices $I_G$. If each vertex in $I_G$ has down-degree at least $2$, then $G \subsetsim H \boxtimes P$ for some outerplanar graph $H$ and path $P$. Moreover, if $G$ is a leveled plane graph, then $G \subsetsim H \Bow P$.
\end{lem}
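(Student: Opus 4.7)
The plan is to apply \cref{srtw2-bfs} directly. Since $G$ is weakly leveled with levels $(L_0,L_1,\dots)$, the excerpt already notes that these levels form a layering of $G$. So it suffices to exhibit, for each $i\in[n]$, a matching in $G[L_{i-1},L_i]$ saturating $L_{i-1}\cap I_G$. For the ``moreover'' clause we note that if $G$ is (strictly) leveled then there are no horizontal edges, so each $L_i$ is an independent set and the second part of \cref{srtw2-bfs} applies.

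The crux is therefore constructing the matching, and the key tool is planarity in the straight-line embedding. Fix $i$, and order the vertices of $L_{i-1}$ left-to-right as $u_1 < u_2 < \cdots$ and the vertices of $L_i$ left-to-right as $v_1 < v_2 < \cdots$. Because $L_{i-1}$ and $L_i$ lie on two parallel horizontal lines and all edges are straight line-segments, two edges $u_a v_b$ and $u_c v_d$ of $G[L_{i-1},L_i]$ are non-crossing precisely when $(a-c)(b-d)\ge 0$; that is, the bipartite structure is monotone. Since each horizontal edge (in the weakly leveled case) lies entirely within a single level, it does not interfere with this bipartite picture.

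For each $u\in L_{i-1}\cap I_G$ define $f(u)$ to be the \emph{second-leftmost} down-neighbour of $u$ in $L_i$; this is well-defined because $u$ has down-degree at least $2$. I claim $f$ is injective, so $\{u\,f(u):u\in L_{i-1}\cap I_G\}$ is the required matching. Indeed, suppose $u_a<u_c$ both satisfy $f(u_a)=f(u_c)=v_b$. Then $u_a$ has a strictly-leftward down-neighbour $v_{b'}$ with $b'<b$, and $u_c$ has a strictly-leftward down-neighbour $v_{b''}$ with $b''<b$. Now the edges $u_a v_b$ and $u_c v_{b''}$ satisfy $a<c$ and $b>b''$, so they cross, contradicting planarity of the straight-line drawing.

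With these matchings in hand, \cref{srtw2-bfs} yields $G\subsetsim H\boxtimes P$ for some outerplanar $H$ and path $P$; if $G$ is leveled then each $L_i$ is independent and the same theorem upgrades this to $G\subsetsim H\Bow P$. The main obstacle is really just the injectivity of $f$; the rest is bookkeeping. One small subtlety is that in the weakly leveled case horizontal edges are present, but they never occur in $G[L_{i-1},L_i]$ and therefore play no role in either the monotone staircase structure or the matching argument.
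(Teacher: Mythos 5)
Your proof is correct and follows essentially the same route as the paper: reduce to \cref{srtw2-bfs} by constructing, for each pair of consecutive levels, a matching via an extreme-neighbour choice whose injectivity is forced by a straight-line crossing argument. The only cosmetic difference is that you pick the second-leftmost down-neighbour while the paper picks the leftmost, which yields a mirror-image of the same crossing contradiction.
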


\begin{proof}
    Let $(L_0,L_1,\dots)$ be the levels of $G$. Observe that if $G$ is a leveled plane graph, then $V(L_i)$ is an independent set for all $i\geq 0$. For each $ i \in [n]$, let $E_i$ be the set of edges in $G[L_{i - 1},L_i]$ between each vertex $v \in L_{i-1}\cap I_G$ and its leftmost neighbour in $ L_i $; see \cref{fig:path-contraction}. For the sake of contradiction, suppose there exists a vertex $u\in L_{i - 1}\cup L_i$ that is incident to two edges in $E_i$. By construction, each vertex in $L_{i-1}\cap I_G$ is incident to at most one edge in $E_i$ so $u\in L_i$. Let $x$ and $y$ be the neighbours of $u$ in $L_{i-1}$, where $x$ is to the left of $y$. Since $x$ has down-degree at least $2$, $x$ is adjacent to a vertex $v$ that is to the right of $u$. However, this contradicts $G$ being weakly leveled plane since $uy$ and $vx$ cross; see \cref{fig:LevelCrossing}. Therefore, $E_i$ is a matching that saturates $L_{i-1}\cap I_G$. The claim therefore follows by \cref{srtw2-bfs}.
\end{proof}

\begin{figure}[h!]
    \centering 
    \includegraphics[width=0.21\textwidth]{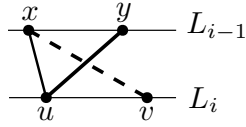}
    \caption{Contradiction in the proof of \Cref{WeaklyLeveledUp}.}
    \label{fig:LevelCrossing}
\end{figure}

We are ready to prove \cref{squaregraphs} which we restate here for convenience.

\squaregraphs*


\begin{proof}
    We may assume that $G$ is connected (since if each component of $G$ has the desired product structure, then so does $G$). By taking a \textsc{bfs}-layering of $G$ rooted at any vertex $r$ on the outer-face, \citet{BDDEW19} showed that $G$ is isomorphic to a leveled plane graph. Without loss of generality, assume $G$ is leveled plane with corresponding levels $(L_0,L_1,\dots)$. 
    
    
    Below we show that every inner vertex in $G$ has up-degree at most 2. Since each inner vertex has degree at least $4$, each inner vertex has down-degree at least $2$. The result thus follows from \cref{WeaklyLeveledUp}.
    
    For the sake of contradiction, suppose there exists an inner vertex with up-degree at least $3$. Let $ i \in [n] $ be minimum such that there is a vertex $ v \in L_i\cap I_G $ with up-degree at least $3$. Let $ u_1, u_2, u_3 $ be neighbours of $v$ in $L_{i-1}$ ordered left to right. Since the levels are defined by a \textsc{bfs}-layering, there is an $(u_1,r)$-path and an $(u_3,r)$-path that does not contain $ u_2 $; see \cref{fig:three-parents}. Hence, $ u_2 $ is an inner vertex of $G$ and thus has degree at least $4$. However, by planarity, $ v $ is the only neighbour of $ u_2 $ in $ L_i $. Since $ u_2 $ has no neighbours in $L_{i-1}$ (as $G$ is leveled plane), $u_2$ has three neighbours in $ L_{i - 2} $, which contradicts the minimality of $ i $, as required. 
\end{proof}

\begin{figure}[!ht]
\centering
\includegraphics{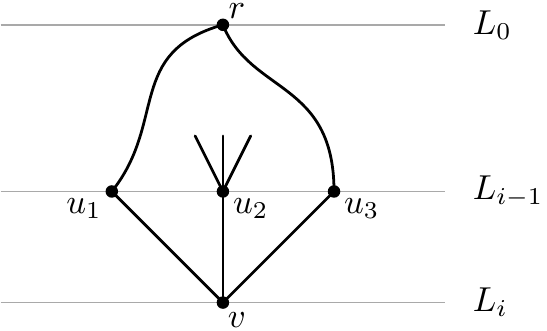}
\caption{Vertex $ v \in L_i $ with three neighbours $ u_1, u_2, u_3 $ in the preceding layer $ L_{i - 1} $. Since $ u_2 $ is an inner vertex, it has degree at least 4.}
\label{fig:three-parents}
\end{figure} 

We now give an application of \cref{squaregraphs}. A colouring $\phi$ of a graph $G$ is \defn{nonrepetitive} if for every path $v_1,\dots,v_{2h}$ in $G$, there exists $i \in [h]$ such that $\phi(v_i)\neq \phi(v_{i+h})$. The \defn{nonrepetitive chromatic number}, $\pi(G)$, is the minimum number of colours in a nonrepetitive colouring of $G$. Nonrepetitive colourings were introduced by \citet{AGHR02} and have since been widely studied; see the survey \citep{Wood21}.

\citet{KP-DM08} showed that $\pi(G)\leq 4^{\tw(G)}$ for every graph $G$. Building upon this result, \citet{DEJWW20} proved the following:
	
 \begin{lem}[\citep{DEJWW20}]\label{NonrepProduct}
 For any graph $H$ and path $P$, if $G \subsetsim H \boxtimes P$ then $\pi(G)\leq 4^{\tw(H)+1}$.
 \end{lem}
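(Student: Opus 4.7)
The plan is to build a nonrepetitive coloring of $H \boxtimes P$ as a \emph{product} of nonrepetitive colorings of the two factors; since $G \subsetsim H \boxtimes P$, the same coloring then works for $G$. A naive approach --- applying $\pi(\cdot) \leq 4^{\tw(\cdot)}$ directly to $H \boxtimes P$ --- is wasteful, since $\tw(H \boxtimes P)$ can be as large as $2\tw(H)+1$, which would yield only $\pi(G) \leq 4^{2\tw(H)+1}$. To recover the sharper bound $4^{\tw(H)+1}$, the colorings of the two factors must be combined multiplicatively.

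First, apply the K\"undgen--Pelsmajer theorem to fix a nonrepetitive coloring $c_H \colon V(H) \to [4^{\tw(H)}]$. Next, take a nonrepetitive coloring $c_P \colon V(P) \to [4]$ (three colors suffice by Thue's theorem, but the extra slack is harmless). Then define
\[
    c(v,w) := (c_H(v),\, c_P(w)) \quad \text{on } V(H \boxtimes P),
\]
using a palette of size $4 \cdot 4^{\tw(H)} = 4^{\tw(H)+1}$.

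To verify nonrepetitivity, suppose toward a contradiction that $(v_1,w_1), \ldots, (v_{2h},w_{2h})$ is a path in $H \boxtimes P$ with $c(v_i,w_i) = c(v_{i+h},w_{i+h})$ for all $i \in [h]$. Then coordinate-wise $c_H(v_i) = c_H(v_{i+h})$ and $c_P(w_i) = c_P(w_{i+h})$. The projections $(v_i)$ to $H$ and $(w_i)$ to $P$ are walks (consecutive entries either coincide or are adjacent), and by the definition of $\boxtimes$, at each step at least one of these two coordinates strictly changes.

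The main difficulty is that these projections are walks rather than paths in the sense used by the definition of $\pi$, so the nonrepetitive property of $c_H$ or $c_P$ cannot be invoked directly. The key step is a case analysis that, from one of the two projections, extracts a genuine path in $H$ (respectively $P$) whose induced coloring still exhibits the offset-$h$ repetition, contradicting the nonrepetitivity of $c_H$ (respectively $c_P$). In degenerate cases where too many coordinates align simultaneously, one instead obtains $(v_i,w_i) = (v_{i+h},w_{i+h})$ for some $i$, contradicting the vertex-distinctness of the original path in $H \boxtimes P$. Either branch closes the argument.
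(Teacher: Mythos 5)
There is a genuine gap, and it sits exactly where you placed your ``key step''. Note first that the paper does not prove \cref{NonrepProduct} at all --- it is quoted from \citep{DEJWW20} --- so your proposal has to be measured against that proof. Your plan is to colour $H\boxtimes P$ by the pair $(c_H(v),c_P(w))$ of ordinary nonrepetitive colourings of the factors and then, given a repetitively coloured path in the product, to ``extract a genuine path in $H$ (respectively $P$) whose induced colouring still exhibits the offset-$h$ repetition''. This extraction is not a routine case analysis and in general it is not available: the projections are walks that may revisit vertices, and nonrepetitivity of $c_H$ on \emph{paths} says nothing about \emph{walks}. If you shortcut a walk $v_1,\dots,v_{2h}$ to a path you destroy the offset-$h$ alignment between indices, so the hypothesis $c_H(v_i)=c_H(v_{i+h})$ is useless for the extracted path; and a repetitively coloured walk need not contain any repetitively coloured path at all (think of a walk that repeatedly backtracks through a small set of vertices carrying few colours). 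Likewise, your ``degenerate case'' dichotomy --- either extract a path or conclude $v_i=v_{i+h}$ and $w_i=w_{i+h}$ for some $i$ --- is precisely the property that an arbitrary nonrepetitive colouring of $H$ need not have. Indeed, no bound of the form $\pi(G\boxtimes H)\le f(\pi(G),\pi(H))$ is known, so an argument that uses only the nonrepetitivity of the two factor colourings cannot suffice.

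This is exactly why the proof in \citep{DEJWW20} is structured differently. They introduce \emph{strongly nonrepetitive} colourings, in which every repetitively coloured walk $v_1,\dots,v_{2h}$ must satisfy $v_i=v_{i+h}$ for some $i$, and they rework the K\"undgen--Pelsmajer argument to show that every graph of treewidth $k$ admits such a colouring with $4^{k}$ colours (this strengthening is the technical heart; it cannot be obtained by citing K\"undgen--Pelsmajer as a black box). Given a repetitively coloured path in $H\boxtimes P$, strong nonrepetitivity of the $H$-colouring forces $v_i=v_{i+h}$ for some $i$, whence $w_i\neq w_{i+h}$, and the remaining case is killed by a specially designed $4$-colouring of the path factor that handles lazy steps --- an arbitrary nonrepetitive (e.g.\ Thue) colouring of $P$ is again not enough. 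The multiplicative bound $4\cdot 4^{\tw(H)}=4^{\tw(H)+1}$ comes from this decomposition, not from a product of ordinary nonrepetitive colourings. To repair your write-up you would need to prove the strengthened treewidth lemma and specify the path colouring, at which point you are reproducing the argument of \citep{DEJWW20} rather than simplifying it.
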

 

Using (a variation of) \cref{PGPST,NonrepProduct}, \citet{DEJWW20} resolved a long-standing conjecture of \citet{AGHR02} by showing that planar graphs $G$ have bounded nonrepetitive chromatic number; in particular, $\pi(G) \leq 768$. When $G$ is a squaregraph, \Cref{squaregraphs,NonrepProduct} imply that $\pi(G)\leq 4^3=64$.

\section{Tightness}\label{SectionLB}

In this section, we show that \cref{squaregraphs} is tight by proving a lower bound for the product structure of bipartite graphs.

The \defn{row treewidth} of a graph $G$ is the minimum integer $k$ such that $G\subsetsim H \boxtimes P$ for some graph $H$ with treewidth $k$ and path $P$ \cite{BDJMW}. \cref{PGPST} says that every planar graph has row treewidth at most $6$. \citet{DJMMUW20} showed that the maximum row treewidth of planar graphs is at least $3$. They in fact proved the following stronger result.

\begin{thm}[\cite{DJMMUW20}]
    \label{rtwLB}
    For all $k,\ell\in\mathbb{N}$ with $k\geq 2$ there is a graph $G$ with pathwidth $k$ such that for any graph $H$ and path $P$, if $G \subsetsim H \boxtimes P \boxtimes K_{\ell}$  then $K_{k+1}\subsetsim H$ and thus $H$ has treewidth at least $k$. Moreover, if $k=2$ then $G$ is outerplanar, and if $k=3$ then $G$ is planar.
\end{thm}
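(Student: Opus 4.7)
The plan is to take $G$ to be the $k$-th power $P_n^k$ of a long path, for $n = n(k,\ell)$ sufficiently large. This is a $k$-tree: the sliding windows $B_i := \{v_i, v_{i+1}, \ldots, v_{i+k}\}$ form a path-decomposition of width $k$, and $K_{k+1}\subsetsim P_n^k$ forces $\pw(P_n^k) \geq k$, so equality holds. The ``moreover'' clauses follow from standard drawings: $P_n^2$ is outerplanar (spine with short chords drawn above), and $P_n^3$ admits a planar stacked-triangulation drawing. So the geometric side of the theorem is essentially free, and all the work lies in the lower bound.

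For the lower bound, suppose $G \subsetsim H \boxtimes P \boxtimes K_\ell$. By \cref{OrthogonalPartitions}, $G$ has a layered $H$-partition $(\Pcal,\Lcal)$ of width at most $\ell$. The graph $G$ contains $n-k$ consecutive cliques $Q_i := \{v_i,\ldots,v_{i+k}\}$, each isomorphic to $K_{k+1}$. Because $Q_i$ has diameter $1$, its $k+1$ vertices fall into two consecutive layers of $\Lcal$, and at most $\ell$ of them lie in any single (part, layer) cell. If some $Q_i$ occupies $k+1$ distinct parts of $\Pcal$, then those parts are pairwise adjacent in $H$ (because $Q_i$ is complete), yielding $K_{k+1}\subsetsim H$ and hence $\tw(H) \geq k$. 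So it suffices to force such a ``spread'' clique.

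The combinatorial core is a Ramsey-style pigeonhole argument. To each $Q_i$ associate a finite ``type'' consisting of: its layer pair, the partition of the $k+1$ positions between those two layers, and the equivalence relation on positions given by ``lies in the same part of $\Pcal$''. The number of possible types is bounded in terms of $k$ and $\ell$, so for $n$ large enough we find a long run of consecutive $Q_i$'s sharing the same type. Consecutive cliques $Q_i$ and $Q_{i+1}$ overlap in $k$ vertices, so one can track how parts evolve along the run: the width bound $\ell$ controls how many cliques can share a given block of parts, and eventually some window must break apart into $k+1$ distinct parts, producing the desired $K_{k+1}$ in $H$.

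The main obstacle is quantifying this Ramsey step. For $\ell = 1$ the argument is short, because each (part, layer) cell contains at most one vertex and the sliding overlap forces distinctness almost immediately; for $\ell \geq 2$ the vertices of a single $Q_i$ may legitimately collapse into fewer than $k+1$ parts, and the proof must exploit the abundance and overlap of the $Q_i$'s to ``pump'' the partition apart. I expect $n(k,\ell)$ to grow rapidly (tower-type in $\ell$ is plausible), but the exact quantitative bound is unimportant: one only needs \emph{some} finite $n$, after which the pathwidth, outerplanarity, and planarity of $P_n^k$ are already in hand.
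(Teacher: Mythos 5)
There is a genuine gap, and it is fatal to the whole plan: the graph $G=P_n^k$ simply does not force $K_{k+1}\subsetsim H$, not even for $\ell=1$. Partition $V(P_n^k)=\{v_1,\dots,v_n\}$ into the $k$ congruence classes $A_r=\{v_i: i\equiv r \pmod k\}$, $r\in[k]$, and take the layering whose layers are the consecutive blocks $L_j=\{v_{(j-1)k+1},\dots,v_{jk}\}$. Every edge of $P_n^k$ joins vertices whose indices differ by at most $k$, hence lies within a block or between consecutive blocks, so this is a layering; each part meets each layer in exactly one vertex, so the layered $H$-partition has width $1$ with quotient $H\cong K_k$. By \cref{OrthogonalPartitions}, $P_n^k\subsetsim K_k\boxtimes P\boxtimes K_1$, and $K_k$ contains no $K_{k+1}$ and has treewidth $k-1<k$. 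So no Ramsey-type or pumping argument can rescue the construction: your sliding cliques $Q_i$ can happily live in only $k$ parts forever (in the embedding above, $v_i$ and $v_{i+k}$ share a part and lie in consecutive layers, which is perfectly legal in $H\boxtimes P$). The intuition to absorb is that ``path-like'' graphs of bounded bandwidth always embed into $K_k\boxtimes P$, so they can never witness a row-treewidth lower bound; the pathwidth of $G$ being $k$ is necessary but nowhere near sufficient.

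The constructions that do work (in \cite{DJMMUW20}, and in this paper's \cref{LowerBoundSubgraph}, whose $n=2$ case yields \cref{rtwLB}) are recursive and crucially use small radius plus high branching rather than a single long path of cliques: the base graph is a long path together with a dominant vertex, and $G^{(k)}$ is obtained from many ($3\ell$) disjoint copies of $G^{(k-1)}$ plus a new dominant vertex $v$. Because $G^{(k)}$ has radius $1$, every layering has at most three layers, so the part $A_w$ containing $v$ has at most $3\ell$ vertices; hence some copy of $G^{(k-1)}$ avoids $A_w$ entirely, induction (together with \cref{lem:BaseCaseLB} to extract a long path in the sub-quotient at the base case) produces a clique, or more generally $P_n+K_{k-1}$, in the sub-quotient, and $w$ dominates it because $v$ dominates $G^{(k)}$. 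That interplay between bounded radius (which bounds $|A_w|$) and many disjoint copies (which lets one dodge $A_w$) is the key idea your proposal is missing, and it cannot be simulated inside a single path power. The ``moreover'' clauses are then handled by noting the first two levels of the recursion are outerplanar and planar respectively, which plays the role you intended for $P_n^2$ and $P_n^3$.
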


\cref{squaregraphs} says that squaregraphs have row treewidth at most 2.
We show that this bound is tight by proving \cref{BipartiteLower} which is an analogous result to \cref{rtwLB} for bipartite graphs. As an introduction to the key ideas in the proof of \cref{BipartiteLower}, we first establish \cref{LowerBoundSubgraph} which is a slight generalisation of \cref{rtwLB}. We need the following lemma for finding long paths in quotient graphs.

\begin{lem}\label{lem:BaseCaseLB}
Let $ G $ be a graph and $(A_x : x\in V(H))$ be an $H$-partition of $G$ such that $|A_x|\leq a$ for all $x \in V(H)$. Then for every $w \in V(H)$ and $ n \in \N $, there is a sufficiently large $ n' \in \N $ such that if $ G $ contains a path on $ n' $ vertices, then $ H-w$ contains a path on $ n $ vertices.
\end{lem}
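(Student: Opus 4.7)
The plan is to project a long path in $G$ to a walk in $H$ via the $H$-partition, exploit the bound $|A_w|\le a$ to extract a long sub-path of $G$ that avoids $A_w$, and then iterate (or induct on $n$) to produce a path of length $n$ in $H-w$.

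Concretely, suppose $G$ contains a path $P=v_1 v_2 \cdots v_{n'}$ and, for each $i$, let $x_i \in V(H)$ be the vertex with $v_i \in A_{x_i}$. Because $(A_x:x\in V(H))$ is an $H$-partition, consecutive $x_i$ and $x_{i+1}$ are either equal or adjacent in $H$, so $x_1, \ldots, x_{n'}$ is a walk in $H$. Since $|A_w|\le a$, at most $a$ of the $v_i$ lie in $A_w$; removing them splits $P$ into at most $a+1$ sub-paths whose vertex sets are disjoint from $A_w$. By pigeonhole, one such sub-path $P'$ has $|V(P')| \ge (n'-a)/(a+1)$ vertices, and the induced partition $(A_x\cap V(P') : x\in V(H-w),\ A_x\cap V(P')\neq \emptyset)$ is an $\tilde H$-partition of $P'$ for some subgraph $\tilde H\subseteq H-w$, still with parts of size at most $a$.

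I would then induct on $n$ to extract a path of length $n$ in $\tilde H\subseteq H-w$. The base case $n=1$ holds as soon as $\tilde H$ is non-empty, which is ensured for $n'>a$. For the inductive step, let $n''$ be the threshold given by the inductive hypothesis for $n-1$, and take $n'$ large enough that $(n'-a)/(a+1) \ge n''$. Applying the hypothesis to $P'$, its $\tilde H$-partition, and a vertex $w' \in V(\tilde H)$ chosen as the image under the partition of an endpoint of $P'$ yields a path $Q$ of length $n-1$ in $\tilde H - w'$; because $w'$ was chosen as the part of an endpoint of $P'$, it is adjacent in $\tilde H$ to the endpoint of $Q$ produced inductively, so $Q$ together with $w'$ forms a path of length $n$ in $\tilde H \subseteq H-w$.

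The main obstacle is exactly the extension step: ensuring that the vertex $w'$ removed before invoking the inductive hypothesis really does extend the shorter path rather than sitting disconnected from it. The cleanest remedy is to strengthen the inductive hypothesis to control the endpoints of the quotient path in terms of the endpoints of the original path in $G$, or equivalently to iterate the initial splitting over both $w$ and $w'$ so that the endpoints line up. Quantitatively this forces $n'$ to grow roughly as $(a+1)^n$, which is fine since the lemma only asks for finite $n'$. Since graphs here are finite, the much cruder bound $n' := a\,|V(H)|+1$ makes the hypothesis vacuously false and also suffices; however, it does not preview the combinatorial technique that \cref{BipartiteLower} will require.
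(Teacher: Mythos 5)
Your opening paragraph matches the paper's setup exactly (split the path at the at most $a$ vertices of $A_w$, pass to the sub-quotient $\tilde H\subseteq H-w$ of a longest remaining piece), but the induction on $n$ you then propose has a genuine gap, and you name it yourself: the unstrengthened inductive hypothesis only yields \emph{some} path on $n-1$ vertices in $\tilde H-w'$, with no control over where it ends, so there is no reason it extends through $w'$. Gesturing at ``strengthen the hypothesis to control the endpoints'' is not a proof; the strengthened statement has to be formulated and pushed through the induction. (It can be done: prove that the sub-quotient of a sufficiently long path $P'$ contains a path on $n$ parts \emph{starting at the part of the first vertex of $P'$}, using that after deleting $A_{w'}$, where $w'$ is that part, every remaining segment begins at a vertex whose part is adjacent to $w'$ — but none of this appears in your writeup.) The paper sidesteps the endpoint issue entirely with one observation you never use: the sub-quotient $\tilde H$ of the long piece is connected, has at least $m$ vertices, and has maximum degree at most $2a$ (each vertex of a path has degree at most $2$ and parts have size at most $a$), and a connected graph of bounded maximum degree on sufficiently many vertices contains a path on $n$ vertices. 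That single fact replaces your whole induction on $n$.

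Your fallback $n':=a\,|V(H)|+1$ is not an acceptable repair. It exploits the loose quantification of the statement (with $G$ and $H$ fixed in advance) to make the implication vacuous, but the lemma is applied in \cref{LowerBoundSubgraph} and \cref{BipartiteLower} with the order of quantifiers reversed: there $G$ is \emph{constructed} from a path on $n'$ vertices before $H$ exists ($H$ arises from an arbitrary embedding $G\subsetsim H\boxtimes P\boxtimes K_\ell$), so $n'$ must depend only on $n$ and the part-size bound $a$ (which is $3\ell$ or $5\ell$ there), never on $|V(H)|$. The paper's choice $n'=(a+1)am+a$, with $m$ large in terms of $n$ and $a$, has exactly this uniformity; the roughly $(a+1)^n$ bound from your induction would too, so your route is viable — but only once the endpoint strengthening is actually carried out rather than deferred.
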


\begin{proof}
Let $m$ be sufficiently large compared to $n$ and let $n':=(a+1)am+a$. 
Suppose $G$ has a path on $n'$ vertices. Let $G'=G-A_w$.
Since $|V(P)\cap A_w|\leq a$, $P$ is split into at most $a + 1$ disjoint subpaths in $G'$. 
Thus, there is a path $P_{\max}$ in $G'$ with at least $am$ vertices. 
Let $\tilde{H}$ be the sub-quotient of $H$ with respect to $P_{\max}$.
Observe that $\tilde{H}$ is connected and that $|V(\tilde{H})|\geq am/a = m $.
Moreover, $\tilde{H}\sse H-w$ since $A_w\cap V(P_{\max})=\emptyset$.
Now $\tilde{H}$ has maximum degree at most $2a$ since every vertex in $P_{\max}$ has degree at most~$2$. 
Thus, since $m$ is sufficiently large, $\tilde{H}$ contains a path on at least $n$ vertices, as required.
\end{proof}


The following result generalises \cref{rtwLB} (which is the $n= 2$ case). 


\begin{prop}\label{LowerBoundSubgraph}
    For all $k, \ell,n \in \N$ there exists a graph $G$ with pathwidth at most $k+1$ such that for any graph $H$ and path $P$, if $G \subsetsim H \boxtimes P \boxtimes K_{\ell}$ then $P_n+K_k\subsetsim H$.
\end{prop}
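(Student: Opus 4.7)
The plan is to obtain $G$ by attaching a long path to a designated clique inside the graph from \cref{rtwLB}, and then to apply \cref{lem:BaseCaseLB} to extract the $P_n$-component of $P_n + K_k$ inside the common neighborhood of a forced $K_k$ in $H$.

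More precisely, I would let $\tilde G$ be (a slight strengthening of) the graph from \cref{rtwLB} with parameter $k$, so that $\tilde G$ has pathwidth $k$, contains a designated $K_{k+1}$ as a subgraph, and---crucially---any embedding $\tilde G \subsetsim H \boxtimes P \boxtimes K_\ell$ restricted to this designated $K_{k+1}$ is injective, so that its image is a $(k+1)$-clique $\{y_1,\dots,y_{k+1}\} \subseteq V(H)$. Pick an arbitrary $k$-subset $K^* \subseteq V(\tilde G)$ of the designated $K_{k+1}$ and define $G := \tilde G \cup (K^* + \tilde P_N)$, where $\tilde P_N = w_1w_2\cdots w_N$ is a fresh path with each $w_j$ adjacent to every vertex of $K^*$, for $N$ sufficiently large in terms of $k,\ell,n$. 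A path decomposition of $\tilde G$ of width $k$ has a bag containing $K^*$ by the Helly property for cliques in interval graphs; inserting bags $K^* \cup \{w_j,w_{j+1}\}$ (each of size $k+2$) at that position extends it to a path decomposition of $G$ of width $k+1$, as the proposition requires.

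Now suppose $G \subsetsim H \boxtimes P \boxtimes K_\ell$; by \cref{OrthogonalPartitions} this is equivalent to having a layered $H$-partition of $G$ of width $\ell$. The restriction of this embedding to $\tilde G \subseteq G$, together with the injectivity property, yields a clique $\{y_1,\dots,y_{k+1}\} \subseteq V(H)$ with $K^*$ mapping to $Y := \{y_1,\dots,y_k\}$, a $K_k$ in $H$. Since each $w_j$ is joined to $K^*$ in $G$, its $H$-coordinate must satisfy $h(w_j) \in \bigcap_{i=1}^{k} N_H[y_i] = Y \cup C$, where $C := \bigcap_{i=1}^{k} N_H(y_i)$ (and note $y_{k+1}\in C$). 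Because the vertices of $K^*$ occupy only two consecutive layers, the $w_j$'s are confined to a bounded number of layers, whence each part of the induced $H$-partition on $\{w_1,\dots,w_N\}$ has size at most some $a = O(\ell)$. Iterating \cref{lem:BaseCaseLB} $k$ times to delete $y_1,\dots,y_k$ from $H$ in succession, we conclude that for $N$ sufficiently large there is a path on $n$ vertices in the sub-quotient of $H$ restricted to $V(H)\setminus Y$, which by the containment above lies entirely in $C$. Together with the clique $Y$, this produces the desired copy of $P_n + K_k$ in $H$, because every vertex of $C$ is adjacent to every vertex of $Y$.

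The main obstacle is justifying the injectivity hypothesis on the designated $K_{k+1}$ inside $\tilde G$: a priori, the graph from \cref{rtwLB} need not satisfy this property for a given $(k+1)$-clique subgraph. One approach I would try is to augment $\tilde G$ with auxiliary separating gadgets---for example, attaching pairwise-disjoint long paths or small rigid cliques to each vertex of the designated $K_{k+1}$---so that no two designated vertices can share a part of the layered $H$-partition; an alternative is to inspect the specific construction underlying \cref{rtwLB} and verify directly that some fixed $(k+1)$-clique of $\tilde G$ must map injectively. Either route keeps the pathwidth at $k$, so that the subsequent attachment of the path $\tilde P_N$ still yields a graph $G$ of pathwidth at most $k+1$.
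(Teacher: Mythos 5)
There is a genuine gap, and it is exactly the step you flag as the ``main obstacle'': nothing in \cref{rtwLB} (nor in any easy strengthening of it) gives you a designated $K_{k+1}$ of $\tilde G$ whose vertices are forced into pairwise distinct parts of every layered $H$-partition of width $\ell$. The conclusion of \cref{rtwLB} is only that \emph{some} $K_{k+1}$ appears in the quotient $H$, and a layered $H$-partition is adversarial: parts need not be connected, and the only constraint is that each part meets each layer in at most $\ell$ vertices. So for $\ell\geq 2$ two designated clique vertices can always be placed in a common part, and no locally attached gadget (paths, small cliques) creates any contradiction with this, since merging vertices into one part only has to respect the width bound. Forcing specified vertices of $G$ into distinct, mutually adjacent parts of $H$ is precisely the difficulty the proposition is about, and the paper's proof circumvents it by a different mechanism: it builds the clique side of $P_n+K_k$ one vertex per induction step, taking $3\ell$ disjoint copies of $G^{(k-1)}$ plus a dominant vertex $v$; since the resulting graph has radius $1$, every layering has at most three layers, so the part $A_w$ containing $v$ has at most $3\ell$ vertices, hence some copy of $G^{(k-1)}$ avoids $A_w$ entirely, and induction applied to the sub-quotient of that copy yields $P_n+K_{k-1}$ disjoint from $w$, while dominance of $v$ makes $w$ adjacent to all of it. Your plan has no analogous counting/copies mechanism, so the injectivity hypothesis remains unproven and the rest of the argument has nothing to stand on.

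Two further, more minor problems. First, the pathwidth bound is not justified as written: inserting the bags $K^*\cup\{w_j,w_{j+1}\}$ between two consecutive bags $B_t,B_{t+1}$ of a width-$k$ decomposition breaks the contiguity of the intervals of the vertices in $(B_t\cap B_{t+1})\setminus K^*$, and repairing this by adding those vertices to the inserted bags can push the width well above $k+1$; you would need $K^*$ to lie in an end bag, which you do not control for the graph of \cref{rtwLB}. Second, \cref{lem:BaseCaseLB} deletes a single vertex of $H$, so your $k$-fold application to remove $y_1,\dots,y_k$ needs a (routine, but unstated) strengthening in which a bounded set of quotient vertices is deleted while keeping a long subpath of the attached path. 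The portion of your argument after the injectivity assumption --- confining the $w_j$'s to boundedly many layers, bounding the parts meeting the attached path by $O(\ell)$, and extracting a long quotient path inside the common neighbourhood of the clique --- is sound and close in spirit to how the paper uses \cref{lem:BaseCaseLB} in its base case, but it does not rescue the missing first step.
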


\begin{proof}
    We proceed by induction on $k\geq 1$.
    Let $ n' $ be sufficiently large compared to $n$. Let~$G^{(1)}$ be the graph obtained from a path on $n'$ vertices plus a dominant vertex $v$. 
	Observe that $ G^{(1)} $ has radius 1 and pathwidth at most $2$.
	Suppose $ G^{(1)} \subsetsim H \boxtimes P \boxtimes K_{\ell}$ for some graph~$H$ and path~$P$. 
	By \cref{OrthogonalPartitions}, there is a layered $H$-partition $(A_x : x \in V(H))$ of~$G$ of width~$\ell$. Let $w\in V(H)$ be such that $v \in A_w$.
	Since $G^{(1)}$ has radius $1$, every layering of $G^{(1)}$ consists of at most three layers so $|A_x|\leq 3\ell$ for all $x \in V(H)$.
	By \cref{lem:BaseCaseLB} and since $n'$ is sufficiently large, $ H-w $ contains a path on $ n $ vertices.
	As~$v$ is dominant in $ G^{(1)} $, $w$ is also dominant in $ H $.
	Thus $P_n+K_1 \subsetsim H$.

	Now suppose $k > 1$ and let $G^{(k-1)}$ be a graph that satisfies the induction hypothesis for $k-1$.
	Let~$G^{(k)}$ be obtained by taking $3\ell$ disjoint copies of~$G^{(k-1)}$ plus a dominant vertex~$v$. 
	Then $G^{(k)}$ has pathwidth at most $k+1$.
	As in the base case, let $(A_x : x \in V(H))$ be a layered~$H$-partition of~$G^{(k)}$ of width~$\ell$.
	Let $w \in V(H)$ be such that $v \in A_w$.
	Since $G^{(k)}$ has radius $1$, it follows that $|A_x-\{v\}|\leq 3\ell-1$ for all $x \in V(H)$.
	Thus, there is a copy of $G^{(k-1)}$ that contains no vertices from~$A_w$.
	Now consider the sub-quotient $\tilde{H}$ of $H$ with respect to this copy of $G^{(k-1)}$.
	By induction, $P_n+K_{k-1}\subsetsim \tilde{H}$.
	Since~$v$ is dominant in $G^{(k)}$, $w$ is dominant in $H$ and thus $P_n+K_k\subsetsim H$, as required.
\end{proof}

Note that in \cref{LowerBoundSubgraph}, the graph $G^{(1)}$ is outerplanar and the graph $G^{(2)}$ is planar for every $n \in \N$.

We now prove our main lower bound which is a bipartite version of \cref{LowerBoundSubgraph}. 

\begin{thm}
    \label{BipartiteLower}
    For all $i,j,k, \ell,n \in \N$ where $i+j=k$, there exists a bipartite graph $G$ with pathwidth at most $k+1$ such that 
    for any graph $H$ and path $P$, 
    if $G \subsetsim H \boxtimes P \boxtimes K_{\ell}$ then $P_n+K_{i,j}$ is a $2$-small minor of $H$. 
\end{thm}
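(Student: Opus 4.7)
The plan is to prove \cref{BipartiteLower} by induction on $k = i+j \geq 1$, following the template of the proof of \cref{LowerBoundSubgraph} with two key modifications. First, to preserve bipartiteness, I add at each step a single \emph{half-dominant} vertex $u$---one whose neighbourhood is the entire opposite colour class of the current graph---rather than a fully dominant vertex. Each such addition increases pathwidth by one, so $\pw(G^{(i,j)}) \leq k+1$ is maintained. Second, because a red half-dominant vertex is adjacent in $G$ only to blue vertices, its branch set $\{w\}$ in $H$ is useful as a $K$-vertex only when every $P_n$-branch set has a union (over parts) containing an opposite-colour vertex. Handling this naturally calls for $2$-small (rather than $1$-small, i.e.\ subgraph) minors, and motivates strengthening the inductive statement with a \emph{bichromatic invariant}: each $P_n$-branch set's union contains both colours, each I-side $K$-branch set's union contains a red vertex of $G$, and each J-side $K$-branch set's union contains a blue one.

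Concretely, let $G^{(0,0)} := P_{n'}$ be a properly 2-coloured path (with $n'$ to be fixed later). For $k \geq 1$, assuming WLOG $i \geq 1$ (the J-side case being symmetric), set $G^{(i,j)}$ to be $N := 5\ell+1$ disjoint copies of $G^{(i-1,j)}$ together with one new red vertex $u$ adjacent to every blue vertex of the copies. This graph is bipartite with pathwidth $\leq k+1$ and radius $2$ from $u$, so any layered $H$-partition of width $\ell$ satisfies $|A_x|\leq 5\ell$ for all $x \in V(H)$. Given $G^{(i,j)}\subsetsim H\boxtimes P\boxtimes K_\ell$, let $w$ be the part containing $u$. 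Since $|A_w\setminus\{u\}|\leq 5\ell-1 < N$, some copy $C$ of $G^{(i-1,j)}$ is disjoint from $A_w$. I apply the inductive hypothesis to the sub-quotient $\tilde{H}$ for $C$ to obtain a $2$-small $(P_n+K_{i-1,j})$-minor satisfying the bichromatic invariant, and extend it by adding the singleton $\{w\}$ as a new I-side branch set. This $\{w\}$ is disjoint from the old branch sets (since $w\notin V(\tilde{H})$), adjacent in $H$ to every J-side $K$-branch set (via the $G$-edge from $u$ to the blue vertex in that branch set's union), and adjacent in $H$ to every $P_n$-branch set (via the $G$-edge from $u$ to a blue vertex in its union, guaranteed by the invariant). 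The invariant is preserved because $\{w\}$ contains the red vertex $u \in A_w$, and the $P_n$ and other $K$-branch sets are inherited unchanged from the IH.

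The main obstacle is establishing the base case $k=1$, where I must produce the initial $2$-small $P_n$-minor satisfying the bichromatic invariant. Here $G^{(1,0)}$ is a path $P_{n'}$ plus a red half-dominant $u$, and removing $A_w$ from $P_{n'}$ yields subpaths whose longest, $P_{\max}$, has at least $(n'-5\ell)/(5\ell)$ vertices with colours strictly alternating along it. The plan is to pair up consecutive vertices $(v_{2r-1},v_{2r})$---one red and one blue---into candidate branch sets $\{x_{2r-1},x_{2r}\}$ (of size $1$ or $2$) in the sub-quotient $\tilde{H}$ for $P_{\max}$: each such pair is connected in $\tilde{H}$ (via the $G$-edge $v_{2r-1}v_{2r}$), its union $A_{x_{2r-1}}\cup A_{x_{2r}}$ contains both colours, and consecutive pairs are adjacent in $\tilde{H}$ via the $G$-edge $v_{2r}v_{2r+1}$. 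The hard part is ensuring pairwise disjointness of the selected pairs; using $|A_x|\leq 5\ell$---so each part of $H$ appears in at most $5\ell$ pairs---a greedy selection along the walk yields, for $n'$ sufficiently large, a connected sequence of $n$ pairwise disjoint such pairs forming a path in $\tilde{H}$. This is the bipartite analogue of \cref{lem:BaseCaseLB}, and with it in hand the induction closes.
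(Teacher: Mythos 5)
Your overall strategy is the paper's: induct on $k$, attach a red vertex joined to all blue vertices of $5\ell$-odd copies of the previous graph, use radius $2$ to bound every part by $5\ell$, pick a copy avoiding the apex's part $A_w$, apply the inductive hypothesis to the sub-quotient, and add $\{w\}$ as a new $K_{i,j}$-branch set; your ``bichromatic invariant'' is exactly the paper's strengthened induction hypothesis, and your inductive step is correct. The difference, and the problem, is the base case $k=1$, which you rightly identify as the crux but do not actually prove. Your plan is to pair up consecutive vertices $(v_{2r-1},v_{2r})$ of $P_{\max}$ in $G$, pass to the pairs of parts containing them, and then ``greedily'' select $n$ pairwise disjoint pairs forming a path in $\tilde{H}$, using only that each part meets $P_{\max}$ in at most $5\ell$ vertices. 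This step fails as described: if you skip a conflicting pair, the previously selected pair and the next selected pair need not be adjacent in $\tilde{H}$ (adjacency is only guaranteed between \emph{consecutive} pairs, via the single edge $v_{2r}v_{2r+1}$), so the selected pairs form a path only if they are a run of consecutive pairs; and such a run need not exist, since parts are not independent sets and may contain several vertices of $P_{\max}$ --- for instance, if the partition places $v_{2r}$ and $v_{2r+1}$ in a common part for every $r$ (legal in a width-$\ell$ layered partition), then \emph{every} two consecutive pairs intersect and no run of length $2$ of disjoint pairs exists. The crude count ``each part lies in at most $5\ell$ pairs'' does not rescue this: the total number of conflicts can exceed the number of windows of length $n$, so no pigeonhole closes the argument.

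The desired conclusion is true, but it needs the order of operations reversed, which is how the paper proceeds: first find a long path in the quotient, then pair up \emph{quotient} vertices. Concretely, \cref{lem:BaseCaseLB} (quotient of a long path avoiding $A_w$ is connected, has many vertices and maximum degree at most $2\cdot 5\ell$, hence contains a long path) gives a path $(a_1',\dots,a_{2n}')$ in $H-w$. Disjointness of the branch sets $\{a_{2i-1}',a_{2i}'\}$ is then automatic, consecutive branch sets are adjacent via the path edges of $H-w$, and the bichromatic property comes for free: every edge of this quotient path is witnessed by some edge of $G^{(1,0)}$ between the two parts, and since $v\in A_w$ is excluded, the witness must be an edge $u_ju_{j+1}$ of the properly $2$-coloured path $P_G$, so each union $A_{a_{2i-1}'}\cup A_{a_{2i}'}$ contains both a red and a blue vertex. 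If you replace your greedy selection with this argument (or prove a bichromatic analogue of \cref{lem:BaseCaseLB} with a genuine extraction argument), your proof is complete; as written, the base case has a genuine gap.
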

\begin{proof}
    Let $P_n=(a_1,\dots,a_n)$ be a path on $n$ vertices.
    Let $B=\{b_1,\dots,b_i\}$ and $C=\{c_1,\dots,c_j\}$ be the bipartition of $V(K_{i,j})$. We proceed by induction on $k$ with the following hypothesis: for every $i,j,k, \ell,n \in \N$ where $i+j=k$, there exists a red-blue coloured connected bipartite graph $G$, such that for any graph $H$, if $(A_x : x \in V(H))$ is a layered $H$-partition of $G$  of width $\ell$, then $H$ contains a model $\mu$ of $P_n+K_{i,j}$ such that for each $u \in V(P_n+K_{i,j})$ we have $|V(\mu(u))|\leq 2$ and $\bigcup_{a \in V(\mu(u))}A_a$ contains: 
    \begin{enumerate}
        \item a red vertex when $u\in B$;  
        \item a blue vertex when $u \in C$; and
        \item a red and a blue vertex when $u \in V(P_n)$.
    \end{enumerate}
    The claimed theorem follows by \cref{OrthogonalPartitions}. 
        
	For~$k = 1$ we may assume that $i=1$ and $j=0$.
    Let $ n' $ be sufficiently large and let~$G^{(1,0)}$ be the bipartite graph obtained from a red-blue coloured path $P_G=(u_1,\dots,u_{n'})$ on $n'$ vertices plus a red vertex $v$ adjacent to all the blue vertices in $V(P_G)$.
	Observe that $ G^{(1,0)} $ has radius 2 and pathwidth at most $2$.
	Let $(A_x : x \in V(H))$ be a layered $H$-partition of~$G^{(1,0)}$ of width~$\ell$.
	Let $w\in V(H)$ be such that $v \in A_w$.
	Then $A_w$ contains a red vertex.
	Since $G^{(1,0)}$ has radius $2$, every layering of $G^{(1,0)}$ has at most five layers, so $|A_x|\leq 5\ell$ for all $x \in V(H)$.
	By \cref{lem:BaseCaseLB} and since $n'$ is sufficiently large, $ H-w $ contains a path $P_H=(a_1',\dots,a_{2n}')$ on $ 2n $ vertices.
	Now for every edge $a'_i a'_{i+1}\in E(P_H)$, there exists $j \in [n'-1]$ such that $u_j,u_{j+1}\in A_{a_i'}\cup A_{a_{i+1}'}$. As such, $A_{a_i'}\cup A_{a_{i+1}'}$ contains a red and a blue vertex. For all $i \in [n]$, let $\mu(a_i)=H[\{a_{2i-1}',a_{2i}'\}]$ and $\mu(b_1)=\{w\}$. Then $\mu$ is a model of $P_n+K_{1,0}$ in $H$ which satisfies the induction hypothesis.
	
	Now suppose $k > 1$ and that there is a red-blue coloured connected bipartite graph $G^{(i-1,j)}$ such that for any graph $H$, if $(A_x : x \in V(H))$ is a layered $H$-partition of $G$ of width $\ell$, then $H$ contains a model $\tilde{\mu}$ of $P_n+K_{i-1,j}$ where $|V(\tilde{\mu}(u))|\leq 2$ for all $u \in V(P_n+K_{i-1,j})$ and $\bigcup_{a \in V(\mu(u))}A_a$ contains a red vertex when $u\in B$; a blue vertex when $u \in C$; and a red and a blue vertex when $u \in V(P_n)$.
	Let~$G^{(i,j)}$ be obtained by taking $5\ell$ copies of~$G^{(i-1,j)}$ plus a red vertex~$v$ that is adjacent to all the blue vertices.
	Then $G^{(i,j)}$ has radius $2$ and pathwidth at most $k+1$.
	As in the base case, let $(A_x : x \in V(H))$ be a layered~$H$-partition of~$G^{(i,j)}$ of width~$\ell$.
	Let $w \in V(H)$ be such that $v \in A_w$.
	Then $A_w$ contains a red vertex.
	Since $G^{(i,j)}$ has radius $2$, $|A_w-\{v\}|\leq 5\ell-1$.
	Thus, there is a copy of $G^{(i-1,j)}$ that contains no vertices from~$A_w$.
	Now consider the sub-quotient $\tilde{H}$ of $H$ with respect to this copy of $G^{(i-1,j)}$.
	By induction, $\tilde{H}$ contains a model $\tilde{\mu}$ which satisfies the induction hypothesis.
	Let $\mu(b_i)=\{w\}$ and $\mu(v)=\tilde{\mu}(v)$ for all $v \in V(P_n+K_{i-1,j})$.
	Since~$v$ is adjacent to all the blue vertices in $G$, $w$ is adjacent to a vertex in $\bigcup_{a\in V(\mu(u))}A_a$ whenever $u \in V(P_n)\cup C$. Thus $\mu$ is a model of $P_n+K_{i,j}$ in $H$ which satisfies the induction hypothesis, as required.
\end{proof}

\begin{figure}[!h]
    \centering 
    \includegraphics[width=0.85\textwidth]{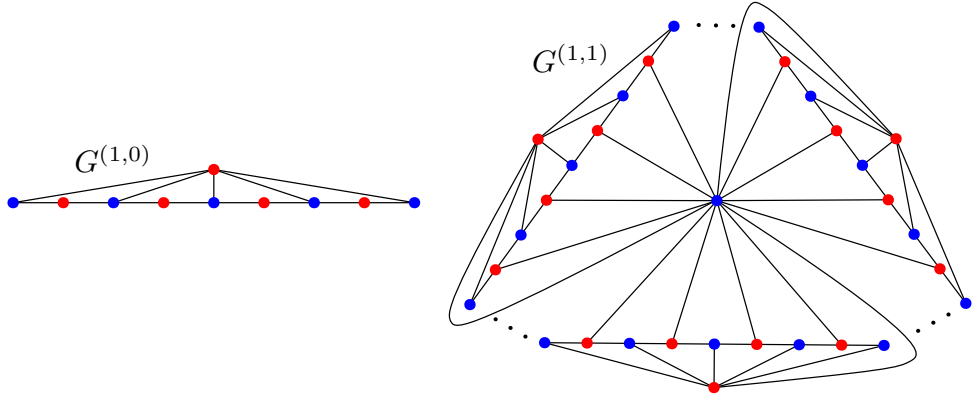}
    \caption{The graphs $G^{(1,0)}$ and $G^{(1,1)}$ from \cref{BipartiteLower}.}
    \label{fig:BipartiteLower}
\end{figure}

We now highlight several consequences of \cref{BipartiteLower}. First, when $i=1$ and $j=0$, the graph $G^{(1,0)}$ is an outerplanar squaregraph as illustrated in \cref{fig:BipartiteLower}. Since $P_2+K_{1,0}$ is a $3$-cycle, we have the following:

\begin{cor}
For every $\ell \in \N$, there exists a squaregraph $G$ such that for any graph $H$ and path $P$, if $G \subsetsim H \boxtimes P \boxtimes K_{\ell}$ then $H$ contains a cycle of length at most 6.
\end{cor}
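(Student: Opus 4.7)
The plan is to apply \cref{BipartiteLower} with parameters $i=1$, $j=0$, $k=1$, and $n=2$ (and the given $\ell$). This produces a bipartite graph $G := G^{(1,0)}$ of pathwidth at most~$2$ with the property that whenever $G \subsetsim H \boxtimes P \boxtimes K_\ell$, the graph $P_2 + K_{1,0}$ is a $2$-small minor of~$H$. Since $P_2 + K_{1,0}$ is the join of a $2$-vertex path with a single vertex, it is exactly a triangle $K_3$.

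Next I would verify that $G^{(1,0)}$ is a squaregraph, as asserted in the sentence preceding the corollary. From the construction in the proof of \cref{BipartiteLower}, $G^{(1,0)}$ consists of a red-blue coloured path $(u_1,\dots,u_{n'})$ together with an extra red vertex $v$ adjacent to every blue vertex of the path. Choosing the colouring so that both $u_1$ and $u_{n'}$ are blue, the natural fan drawing in \cref{fig:BipartiteLower} has every bounded face a $4$-cycle of the form $v\, u_{2t-1}\, u_{2t}\, u_{2t+1}$, and every inner vertex is a blue $u_{2t}$ with $2 \le 2t \le n'-1$, which has degree exactly~$4$. Every vertex lies on the outer face, so $G^{(1,0)}$ is in fact an outerplanar squaregraph.

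Finally I would translate the $2$-small minor of $K_3$ into a short cycle of~$H$. Let $\mu$ be such a model with pairwise disjoint branch sets $\mu(x_1),\mu(x_2),\mu(x_3)$, each a connected subgraph of $H$ on at most two vertices. Choose one edge of~$H$ between each of the three pairs of branch sets, and whenever a branch set uses two distinct vertices among the chosen inter-branch endpoints, add its single internal edge to join them. The result is a closed walk in~$H$ that visits at most $2+2+2=6$ vertices, and hence contains a cycle of length at most~$6$, as required.

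I do not expect any serious obstacle: the corollary follows directly from \cref{BipartiteLower} together with two elementary observations. The only step requiring genuine care is the verification that the fan graph $G^{(1,0)}$ really admits the claimed plane embedding as a squaregraph (every bounded face a $4$-cycle and every inner vertex of degree at least~$4$), and this is settled by the explicit construction above together with the picture in \cref{fig:BipartiteLower}.
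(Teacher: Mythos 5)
Your proposal is correct and follows essentially the same route as the paper: apply \cref{BipartiteLower} with $i=1$, $j=0$, $n=2$, note that $G^{(1,0)}$ is an outerplanar squaregraph and that $P_2+K_{1,0}$ is a triangle, and convert the resulting $2$-small minor of $K_3$ into a cycle of length at most $6$ in $H$. One small slip in your squaregraph verification: in the fan drawing the vertices $u_{2t}$ have degree $2$ and are not inner vertices (every vertex lies on the outer face), but this is harmless since outerplanarity makes the inner-vertex degree condition vacuous and all bounded faces are $4$-cycles.
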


Thus \Cref{squaregraphs} is best possible in the sense that ``outerplanar graph'' cannot be replaced by ``forest''.
 
Second, when $i=j=1$, the graph $G^{(1,1)}$ is a bipartite planar graph, as illustrated in 
\cref{fig:BipartiteLower}. Since $P_2+K_{1,1}\cong K_4$ which has treewidth $3$, we have the following:

\begin{cor}
    For every $\ell \in \N$, there exists a bipartite planar graph $G$ such that 
    for any graph $H$ and path $P$, if $G \subsetsim H \boxtimes P \boxtimes K_{\ell}$ then $H$ contains a 2-small minor of $K_4$ and 
thus $\tw(H)\geq 3$.
\end{cor}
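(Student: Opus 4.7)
The plan is to specialise \cref{BipartiteLower} at $i = j = 1$ (so $k = 2$) and $n = 2$. This supplies a bipartite graph $G$ of pathwidth at most $3$ such that every graph $H$ with $G \subsetsim H \boxtimes P \boxtimes K_\ell$ contains $P_2 + K_{1,1}$ as a $2$-small minor. Since $P_2 \cong K_{1,1} \cong K_2$, the complete join $P_2 + K_{1,1}$ is isomorphic to $K_4$, so $H$ contains $K_4$ as a $2$-small minor. Treewidth is monotone under taking minors and $\tw(K_4) = 3$, so we conclude $\tw(H) \ge 3$, which is exactly the stated conclusion.

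The only non-routine obligation is to check that the specific bipartite graph produced by the proof of \cref{BipartiteLower} at $i = j = 1$ is planar. I would verify this by inspecting the construction: starting from the base case $G^{(1,0)}$ (a properly $2$-coloured path $P_G$ plus a red apex adjacent to all blue vertices of $P_G$) and applying the colour-symmetric form of the inductive step, one obtains $G^{(1,1)}$ as a properly $2$-coloured path together with a red apex adjacent to all blue path-vertices and a blue apex adjacent to all red path-vertices. Drawing the path horizontally with the red apex above and the blue apex below gives a planar embedding, since the two apex-fans lie in disjoint half-planes and the path itself contributes no crossings; bipartiteness is visible from the fixed $2$-colouring.

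The main potential obstacle is ensuring that this planarity verification is sound — in particular, that the colour-symmetric variant of the inductive step (adding a blue apex rather than a red one) still produces a graph satisfying the conclusion of \cref{BipartiteLower}. This is immediate by exchanging the roles of ``red'' and ``blue'' throughout the inductive argument, under which the statement of \cref{BipartiteLower} is invariant. With this symmetry observed, the corollary reduces cleanly to the combination of \cref{BipartiteLower}, the isomorphism $P_2 + K_{1,1} \cong K_4$, and minor-monotonicity of treewidth.
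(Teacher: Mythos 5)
Your overall route is the same as the paper's: apply \cref{BipartiteLower} with $i=j=1$ and $n=2$, observe $P_2+K_{1,1}\cong K_4$, and use minor-monotonicity of treewidth to get $\tw(H)\geq 3$. That part is fine. The flaw is in your planarity verification, which is carried out for the wrong graph. In the inductive step of \cref{BipartiteLower}, $G^{(i,j)}$ is obtained by taking $5\ell$ \emph{disjoint copies} of $G^{(i-1,j)}$ and adding one new apex adjacent to all vertices of the opposite colour, \emph{including the apexes of the copies}. So $G^{(1,1)}$ is not ``a properly $2$-coloured path with a red apex above and a blue apex below'': it consists of $5\ell$ copies of $G^{(1,0)}$ (or its colour-swap), each a path with its own red apex, together with a single blue apex joined to every red vertex of every copy, apexes included. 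Your ``two fans in disjoint half-planes'' drawing therefore does not describe the constructed graph, and you cannot substitute your simplified single-path graph for $G^{(1,1)}$, because the $5\ell$ copies are essential to the induction in \cref{BipartiteLower} (one needs a copy of $G^{(i-1,j)}$ avoiding $A_w$), so the lower-bound conclusion is not certified for the simplified graph.

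The gap is easily repaired: each copy of $G^{(1,0)}$ is outerplanar (it is a subgraph of a fan), so adding a single new vertex adjacent to an arbitrary subset of its vertices preserves planarity, with the new vertex on the outer face of the resulting drawing; since the $5\ell$ copies meet only in the new blue apex, these drawings can be arranged in disjoint sectors around that apex, giving a planar embedding of $G^{(1,1)}$ (this is what \cref{fig:BipartiteLower} depicts). Bipartiteness is clear since the new apex is adjacent only to vertices of the opposite colour. Your appeal to red--blue symmetry to justify the colour-swapped inductive step is correct and is indeed implicitly used by the paper; with the corrected description of $G^{(1,1)}$ and the amended planarity argument, your proof matches the paper's.
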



Therefore, the maximum row treewidth of bipartite planar graphs is at least $3$. We conclude this section with the following open problem: what is the maximum row treewidth of bipartite planar graphs? As in the case of (non-bipartite) planar graphs, the answer is in $\{3,4,5,6\}$.

\section{Infinite Squaregraphs}
\label{Infinite}

In this section by `graph' we mean a graph $G$ with $V(G)$ finite or countably infinite. \citet{HMSTW} showed how \cref{PGPST} can be used to construct a graph that contains every planar graph as a subgraph and has several interesting properties. Here we adapt their methods to construct an analogous graph that contains every squaregraph as a subgraph. 

\citet{BCE10} gave several equivalent definitions of an infinite squaregraph. The following definition suits our purposes. Let $G$ be a locally finite\footnote{A graph $G$ is \defn{locally finite} if every vertex of $G$ has finite degree.} graph. For every vertex $v$ of $G$ and every $r\in\mathbb{N}$ the subgraph $G[ \{ w\in V(G): \dist_G(v,w)\leq r\} ]$ is called a \defn{ball}. Since $G$ is locally finite, every ball is finite. An infinite graph $G$ is a \defn{squaregraph} if it is locally finite and every ball in $G$ is a squaregraph. Let $\overrightarrow{P}$ be the 1-way infinite path, which has vertex-set $\mathbb{N}_0$ and edge-set $\{ \{i,i+1\} : i \in\mathbb{N}_0 \}$. It is well known that there is a \defn{universal} outerplanar graph $O$. This means that $O$ is outerplanar and every outerplanar graph is isomorphic to a subgraph of $O$. See Theorem~4.14 in \citep{HMSTW} for an explicit definition of $O$. 

\begin{thm}
\label{InfiniteSquaregraph}
Every squaregraph is isomorphic to a subgraph of $O\Bow \overrightarrow{P}$. 
\end{thm}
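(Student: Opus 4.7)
The plan is a compactness argument using König's lemma, combining the finite case (\cref{squaregraphs}) with the rooted structure of the universal outerplanar graph $O$ provided by \citet[Theorem~4.14]{HMSTW}. We may assume $G$ is connected (otherwise embed each component separately, using that $O$ contains countably many disjoint copies of itself). Fix $v_1 \in V(G)$ and, for each $n \in \N$, let $F_n := G[\{w \in V(G) : \dist_G(v_1,w) \le n\}]$ be the ball of radius $n$ around $v_1$. Since $G$ is locally finite, each $F_n$ is finite, and by hypothesis each $F_n$ is a finite squaregraph; moreover $F_n \sse F_{n+1}$ and $\bigcup_n F_n = G$.

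By \cref{squaregraphs}, for each $n$ there is a finite outerplanar graph $H_n$ and a path $P_n$ with $F_n \subsetsim H_n \Bow P_n$. Using the rooted universality of $O$ (a distinguished vertex $o_1 \in V(O)$ such that every finite rooted outerplanar graph embeds into $O$ with its root going to $o_1$), together with the fact that $\overrightarrow{P}$ is a 1-way infinite path (so any finite path embeds into it with any chosen endpoint mapped to $0$), I would obtain for each $n$ an embedding $\phi_n : F_n \hookrightarrow O \Bow \overrightarrow{P}$ sending $v_1$ to the fixed vertex $v^* := (o_1, 0)$.

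Next I form the rooted tree $T$ whose level-$n$ nodes are all such normalized embeddings of $F_n$, with $\phi_{n+1}$ a child of $\phi_n$ exactly when $\phi_{n+1}|_{V(F_n)} = \phi_n$. Each level is non-empty by the previous step. Since $O$ and $\overrightarrow{P}$ are locally finite, so is $O \Bow \overrightarrow{P}$; and for every $\phi_n$ and every $w \in V(F_n)$ we have $\dist_{O \Bow \overrightarrow{P}}(v^*, \phi_n(w)) \le \dist_G(v_1, w) \le n$, so $\phi_n$ maps into the finite ball of radius $n$ about $v^*$. Hence each level of $T$ is finite and $T$ is locally finite. König's lemma then produces an infinite branch $\phi_1 \subset \phi_2 \subset \cdots$ whose union $\phi := \bigcup_n \phi_n$ is the desired subgraph embedding $G \subsetsim O \Bow \overrightarrow{P}$.

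The main technical point will be the normalization step above: arranging each $\phi_n$ to send $v_1 \mapsto v^*$. Placing $v_1$ at layer $0$ of the product structure amounts to running the BFS-layering in the proof of \cref{squaregraphs} from $v_1$ itself, but that proof requires the BFS-root to lie on the outer face of a suitable planar embedding of $F_n$, whereas $v_1$ is typically an interior vertex of the ball $F_n$. The main delicate step is overcoming this — either by re-embedding $F_n$ (possibly after a minor augmentation that keeps $F_n$ a squaregraph) so that $v_1$ lies on the new outer face, or by directly strengthening \cref{srtw2-bfs} to accommodate BFS-layerings from interior vertices of a squaregraph.
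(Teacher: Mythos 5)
Your overall strategy---exhausting $G$ by the balls $F_n$, applying \cref{squaregraphs} to each ball, passing to the universal graph $O$, and then using K\H{o}nig's Lemma to glue---is the same compactness scheme as the paper's, but the object you run K\H{o}nig's Lemma on does not work. You build the tree out of embeddings $\phi_n\colon F_n\hookrightarrow O\Bow \overrightarrow{P}$ normalised at $v^*=(o_1,0)$, and you justify finite branching by asserting that $O$ is locally finite. It is not: $O$ contains every (countable) outerplanar graph as a subgraph, in particular arbitrarily large stars (indeed the infinite star), so $O$ has vertices of infinite degree. Consequently the ball of radius $n$ around $v^*$ in $O\Bow\overrightarrow{P}$ is infinite, level $n$ of your tree $T$ can be infinite, and K\H{o}nig's Lemma does not apply. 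On top of this, the normalisation step you flag as ``the main delicate step'' is genuinely unresolved: the proof of \cref{squaregraphs} needs the \textsc{bfs}-root on the outer face (both for the leveled-planarity result of \citet{BDDEW19} and for the up-degree argument), whereas $v_1$ is an interior vertex of $F_n$ for large $n$; and the ``rooted universality'' of $O$ at a distinguished vertex $o_1$ is an extra property of the construction in \citep{HMSTW} that you assume without proof. So as written the proposal has two gaps, one of which (local finiteness of $O$) is fatal to the stated argument.

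The paper sidesteps both problems by running the compactness argument one level of abstraction lower: instead of embeddings into $O\Bow\overrightarrow{P}$, it considers, for each ball $G_n$, the set $X_n$ of thin layered $O$-partitions of $G_n$ whose layers are independent sets (nonempty by \cref{squaregraphs}, universality of $O$, and \cref{BowPartitions}). These are combinatorial structures on the \emph{finite} vertex set of $G_n$, so each $X_n$ is finite---finiteness comes from the finiteness of the ball, not from any local finiteness of the target. Deleting the outermost sphere maps $X_n$ into $X_{n-1}$, K\H{o}nig's Lemma yields a compatible chain, the union of the chain is a thin layered partition of $G$ with independent layers, and \cref{BowPartitions} converts this back into $G\subsetsim O\Bow\overrightarrow{P}$. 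In particular no anchoring of $v_1$ at a fixed vertex of the product is needed, so the outer-face-root difficulty you identify never arises. If you want to salvage your version, you would have to replace ``embeddings into $O\Bow\overrightarrow{P}$'' by such partition data (or otherwise restore finite branching), which is exactly the adaptation of Lemma~5.3 of \citep{HMSTW} that the paper carries out.
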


\cref{InfiniteSquaregraph} follows from \cref{squaregraphs} and the next lemma, which is an adaptation of Lemma~5.3 in \citep{HMSTW}.

\begin{lem}
Let $H$ be a graph. Let $G$ be a locally finite graph such that $B\subsetsim H\Bow \overrightarrow{P}$ for every ball $B$ in $G$. Then $G\subsetsim H \Bow \overrightarrow{P}$.
\end{lem}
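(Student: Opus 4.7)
The plan is to adapt the compactness argument of \citep[Lemma~5.3]{HMSTW}, which established the analogous statement with $\boxtimes$ in place of $\Bow$.

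\textbf{Setup and normalization.} First reduce to the connected case (otherwise treat each component separately) and fix a vertex $v_0\in V(G)$. For each $r\in\mathbb{N}_0$, let $B_r$ denote the (finite) ball of radius $r$ around $v_0$, so $V(G)=\bigcup_rV(B_r)$. By hypothesis, each $B_r$ admits an embedding $\phi_r\colon B_r\hookrightarrow H\Bow\overrightarrow{P}$; write $\phi_r(v)=(f_r(v),g_r(v))$. Since $(h,p)\mapsto(h,p+s)$ is a self-embedding of $H\Bow\overrightarrow{P}$ for every $s\ge 0$, we may translate $\phi_r$ along $\overrightarrow{P}$; shift so that $g_r(v_0)=r$. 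Every edge of $H\Bow\overrightarrow{P}$ changes the second coordinate by exactly $1$, so $|g_r(v)-g_r(v_0)|\le\dist_G(v,v_0)$; hence the image sits inside $V(H)\times[0,2r]$, and the centered values $g_r(v)-r$ range over the finite set $\{-d,-d+2,\dots,d\}$ where $d=\dist_G(v,v_0)$.

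\textbf{Compactness.} Build a rooted tree $T$ whose level-$r$ nodes are the normalized embeddings of $B_r$ and whose edges run from each level-$(r+1)$ embedding to its restriction to $B_r$. Every level is nonempty by hypothesis. Provided every level is also finite, König's infinity lemma supplies an infinite branch $\psi_0,\psi_1,\dots$ with $\psi_{r+1}|_{B_r}=\psi_r$, and the union $\psi:=\bigcup_r\psi_r$ is the required embedding $G\hookrightarrow H\Bow\overrightarrow{P}$.

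\textbf{Main obstacle.} The delicate step is showing that every level of $T$ is finite. The path coordinate is controlled by the normalization, but the $H$-coordinate is not constrained a priori. Since any walk in $H\Bow\overrightarrow{P}$ projects onto a walk in $H$ of the same length, we have $d_H(f_r(v_0),f_r(v))\le\dist_G(v,v_0)$ for every $v\in B_r$; so when $H$ is locally finite (as is the universal outerplanar graph $O$ used in \cref{InfiniteSquaregraph}), $f_r(v)$ is pinned to a finite set once $f_r(v_0)$ is fixed. A further König-style argument applied to the descending chain $S_r:=\{h^\star\in V(H):\text{some normalized embedding of }B_r\text{ sends }v_0\text{ to }(h^\star,r)\}$---which is nested because restricting an embedding of $B_{r+1}$ to $B_r$ is still an embedding---lets us assume $f_r(v_0)$ takes a common value $h^\star$ along a subsequence, at which point every level of $T$ is finite and the argument concludes exactly as in \citep{HMSTW}.
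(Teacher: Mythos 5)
Your compactness framework (König's lemma on a tree of restriction-compatible embeddings of the balls) is the right general shape, but the step you yourself flag as the ``main obstacle'' is exactly where the proposal breaks, and the fix you offer does not work. First, level-finiteness of your tree is derived only under the assumption that $H$ is locally finite, and your parenthetical claim that the universal outerplanar graph $O$ is locally finite is false: the infinite star is a countable outerplanar graph, so $O$ contains it and hence has a vertex of infinite degree. Thus your argument does not even cover the one instance of the lemma that the paper needs for \cref{InfiniteSquaregraph}. Second, even granting local finiteness of $H$, the ``further K\H{o}nig-style argument'' on the sets $S_r$ is unjustified: these sets are nested but may each be infinite, and a nested sequence of nonempty infinite sets can have empty intersection, so you cannot ``assume $f_r(v_0)$ takes a common value $h^\star$ along a subsequence''. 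Nothing in your setup prevents the $H$-coordinates of the embeddings of $B_r$ from drifting off to infinity in $H$ as $r$ grows, and this drift is the genuine difficulty, not a technicality.

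The paper's proof avoids the problem by never applying compactness to embeddings at all. Using \cref{BowPartitions}, an embedding of a ball $G_n$ into $H\Bow \overrightarrow{P}$ is converted into an intrinsic combinatorial object: a thin layered $H$-partition of $G_n$ whose layers are independent sets. For a finite connected graph there are only finitely many partition/layering pairs, \emph{regardless of what $H$ is}, so the levels of the K\H{o}nig tree are finite for free; removing the outermost distance class $V_n$ turns such a structure on $G_n$ into one on $G_{n-1}$, K\H{o}nig's lemma gives a compatible sequence, the union is a thin layered $H$-partition of $G$ with independent layers, and \cref{BowPartitions} converts back to $G\subsetsim H\Bow\overrightarrow{P}$. (Even this limit step needs the care deferred to \citep{HMSTW}: one must know the quotient of the limiting partition still embeds in $H$, which in the intended application follows because outerplanarity is preserved under increasing unions and $O$ is universal.) So the missing idea in your proposal is precisely this change of object from embeddings to partitions; without it, the finiteness needed for K\H{o}nig's lemma cannot be obtained for the $H$ at hand.
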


\begin{proof}[Proof Sketch] 
Fix $v\in V(G)$. For $n\in\mathbb{N}_0$, let $V_n :=\{w\in V(G): \dist_G(v,w)=n\}$ and $G_n:=G[ V_0\cup V_1 \cup\dots \cup V_n]$. So $G_n$ is a finite ball in $G$. By assumption, $G_n \subsetsim H \Bow \overrightarrow{P}$. Let $X_n$ be the set of all thin layered $H$-partitions $(\Pcal,\mathcal{L})$ of $G_n$, such that $L$ is an independent set in $G_n$ for each $L \in \mathcal{L}$. By \cref{BowPartitions}, $X_n\neq\emptyset$. Since $G_n$ is finite and connected, $X_n$ is finite. 
For each $n\in\mathbb{N}$ and for each $(\Pcal,\mathcal{L})\in X_n$, 
if 
$\Pcal':= \{ Y \setminus V_n : Y \in \Pcal, Y \setminus V_n\neq\emptyset\}$
and
$\Lcal':= \{ L \setminus V_n : L \in \Lcal, Y \setminus V_n\neq\emptyset\}$
then 
$(\Pcal',\Lcal')\in X_{n-1}$ (since $G_{n-1}$ is connected). 
By K\H{o}nig's Lemma, there is an infinite sequence $(\Pcal_0,\Lcal_0), 
(\Pcal_1,\Lcal_1), (\Pcal_2,\Lcal_2), \dots$ where 
$\Pcal_{n-1}=\Pcal'_{n}$
and
$\Lcal_{n-1}=\Lcal'_{n}$ for each $n\in\mathbb{N}$. 
By construction, 
$\Pcal_{n-1}$ is a `sub-partition' of $\Pcal_{n}$ and
$\Lcal_{n-1}$ is a `sub-partition' of $\Lcal_{n}$. 
Let $\Pcal:= \bigcup_{n\in\mathbb{N}_0} \Pcal_n$ and
 $\Lcal:= \bigcup_{n\in\mathbb{N}_0} \Lcal_n$. Then $(\Pcal,\Lcal)$ is a thin layered $H$-partition of $G$; see \citep{HMSTW} for details. By \cref{BowPartitions}, $G\subsetsim H \Bow \overrightarrow{P}$.
\end{proof}

\subsection*{Acknowledgement}

This research was initiated at the workshop, \emph{Geometric Graphs and Hypergraphs}, 30 August -- 3 September 2021, organised by Torsten Ueckerdt and Lena Yuditsky. Thanks to the organisers and other participants for creating a productive environment.

\fontsize{11}{12}
\selectfont
\let\oldthebibliography=\thebibliography
\let\endoldthebibliography=\endthebibliography
\renewenvironment{thebibliography}[1]{%
	\begin{oldthebibliography}{#1}%
		\setlength{\parskip}{0.3ex}%
		\setlength{\itemsep}{0.3ex}%
	}{\end{oldthebibliography}}
\bibliographystyle{DavidNatbibStyle}
\bibliography{myBibliography}
\end{document}